\documentclass{article}
\usepackage{cite}
\usepackage{amsmath,amssymb,amsthm,mathrsfs}
\usepackage{titlesec,hyperref}
\usepackage{color}
\usepackage{fancyhdr}
\usepackage[margin=2.5cm]{geometry}
\usepackage{enumerate}
\usepackage[integrals]{wasysym}
\headsep=2mm \headheight=2mm \topmargin=0pt \oddsidemargin=0pt
\evensidemargin=1pt \textheight=230mm \textwidth=164mm

\usepackage{bm}
\usepackage{graphicx}
\usepackage{caption}
\usepackage{float}
\usepackage{subfigure}

\newtheorem{theo}{Theorem}[section]
\newtheorem{lemm}[theo]{Lemma}
\newtheorem{defi}[theo]{Definition}

\newtheorem{rema}[theo]{Remark}
\def\ud{{\rm d}}
\numberwithin{equation}{section}

\allowdisplaybreaks

\begin{document}

\title{The existence and uniqueness of global admissible conservative weak solution for the periodic single-cycle pulse equation}
\author{
    Yingying $\mbox{Guo}^1$ \footnote{email: guoyy35@fosu.edu.cn} \quad and\quad
    Zhaoyang $\mbox{Yin}^{2,3}$ \footnote{email: mcsyzy@mail.sysu.edu.cn}\\
    $^1\mbox{Department}$ of Mathematics, Foshan University,\\
    Foshan, 528000, China\\
    $^2\mbox{Department}$ of Mathematics, Sun Yat-sen University,\\
    Guangzhou, 510275, China\\
    $^3\mbox{Faculty}$ of Information Technology,\\
    Macau University of Science and Technology, Macau, China
}
\date{}
\maketitle
\begin{abstract} 
  This paper is devoted to the study of the existence and uniqueness of global admissible conservative weak solutions for the periodic single-cycle pulse equation. We first transform the equation into an equivalent semilinear system by introducing a new set of variables. Using the standard ordinary differential equation theory, we then obtain the global solution to the semilinear system. Next, returning to the original coordinates, we get the global admissible conservative weak solution for the periodic single-cycle pulse equation. Finally, given an admissible conservative weak solution, we find a equation to single out a unique characteristic curve through each initial point and prove the uniqueness of global admissible conservative weak solution without any additional assumptions.
\end{abstract}
Mathematics Subject Classification: 35Q53, 35B10, 35C05\\
\noindent \textit{Keywords}: The periodic single-cycle pulse equation, Global admissible conservative weak solution, The semilinear system, Existence and uniqueness.

\tableofcontents

\section{Introduction}
In this paper, we consider the initial problem for the following periodic integrable single-cycle pulse equation
\begin{equation}\label{eq1}
\left\{\begin{array}{lll}
u_{xt}=u+\frac{1}{2}u(u^2)_{xx},&\quad t\geq0,\quad x\in\mathbb{R},\\
u(t,x)|_{t=0}=u_0(x),&\quad x\in\mathbb{R},\\
u(t,x+1)=u(t,x),&\quad t\geq0,\quad x\in\mathbb{R}.
\end{array}\right.
\end{equation}
Indeed, Eq. \eqref{eq1} is a generalized short pulse equation \cite{s}
\begin{equation}\label{gsp}
u_{xt}=u+au^2u_{xx}+buu_{x}^2,
\end{equation}
with $a/b=1$.
In \cite{s}, Sakovich stated the generalized short pulse equation \eqref{gsp} is integrable in two cases of its coefficients. The first one is the short pulse (SP) equation \cite{sw}
\begin{equation}\label{sp}
u_{xt}=u+\frac{1}{6}(u^3)_{xx}
\end{equation}
with $a/b=1/2$. The SP equation \eqref{sp} is a model as an alternative equation to the cubic nonlinear Schr\"{o}dinger (NLS) equation to describe the evolution of very short optical pulses in nonlinear media \cite{sw}. It is integrable \cite{fmo,ss1} and has the Lax pair, bi-Hamiltonian structure \cite{b2}. The local well-posedness, global well-posedness and blow-up phenomenon were studied in \cite{lps,ps}. There are many other lectures for SP equation, see \cite{b1,m1,m2,p1,p2,ss2}.

The second one is the above single-cycle pulse equation \eqref{eq1} with $a/b=1$ for the reason the smooth envelope soliton of Eq. \eqref{eq1} is as short as one cycle of its carrier frequency \cite{s}. Thanks to the soliton theory, there are many interesting properties of Eq. \eqref{eq1} to investigate. For instance, the well-posedness, blow-up solutions, periodic solutions, weak solutions, and so on. In effect, Li and Yin \cite{ly2} established the local well-posedness of Eq. \eqref{eq1} in $H^s(\mathbb{S})$ with $s\geq2$ by Kato theory. Moreover, they derived a relationship between the single-cycle pulse equation and the sine-Gordon equation and finally got a global existence result by useful conservative quantities. However, they didn't give any results about the weak solutions for Eq. \eqref{eq1}.

Recently, Hone, Novikov and Wang \cite{hnw} displayed a more general equation than Eq. \eqref{gsp} by classifying the nonlinear partial differential equations of second order
\begin{align}\label{mgsp}
u_{xt}=u+c_0 u^2 + c_1 uu_x + c_2 uu_{xx} + c_3 u^2_x + d_0 u^3+ d_1 u^2 u_x + d_2 u^2 u_{xx} + d_3 uu^2_x.
\end{align}
In fact, Eq. \eqref{mgsp} not only includes Eq. \eqref{eq1} and Eq. \eqref{sp}, but also contains another vital equation that we called the dispersive Hunter-Saxton equation
\begin{align}\label{dhs}
u_{xt}=u+2uu_{xx}+u_x^2
\end{align}
for the reason Eq. \eqref{dhs} has one more dispersive term $u$ than the following Hunter-Saxton (HS) equation \cite{hs}
\begin{align}\label{hs}
(u_t+uu_x)_x=\frac12u_x^2.
\end{align}
The local well-posedness and travelling wave solutions of the periodic dispersive Hunter-Saxton equation \eqref{dhs} were studied in \cite{ly1}. In addition, the HS equation \eqref{hs} as an asymptotic model of liquid crystals is local well-posed and has global strong solutions, global weak solutions and global  dissipative solutions, see \cite{bc1,bss,lz,y}.

The HS equation \eqref{hs} can as well arise in a different physical context as the high-frequency limit \cite{dp,hz} of the following classical Camassa-Holm (CH) equation
\begin{equation}\label{chh}
u_t-u_{xxt}+3uu_{x}=2u_{x}u_{xx}+uu_{xxx}
\end{equation}
which is completely integrable \cite{c2,cgi,cmc} and has a bi-Hamiltonian structure \cite{ff}. The solitary waves, peak solitons, local well-posedeness and ill-posedeness, global strong solutions and blow-up strong solutions of the CH equation were discussed in \cite{c1,c3,c4,c5,ce1,ce2,ce3,ce4,ce5,cs,d,glmy,l,t}. In addition, the CH equation has global conservative weak solutions and dissipative weak solutions \cite{chk,xz1,xz2}. Moreover, the existence and uniqueness of global conservative solutions on the line were studied in \cite{bc2,bc3,bcz,cmo}. And the existence and uniqueness of global conservative solutions on the circle were investigated in \cite{hr,ghr}. However, their unique results depend on a Lipschitz metric $d_{\mathcal{D}}$ from $\mathcal{H}$ to $\mathcal{D}$, where $\mathcal{H}$ satisfies $\int_{0}^{1}y(\xi){\ud}\xi =0$ and $y_{\xi}+H_{\xi}=1+\|H_{\xi}\|_{L^1}$, which means some additional assumptions are needed for the uniqueness.

As far as we know, the existence and uniqueness of global conservative weak solutions for \eqref{eq1} has not been investigated yet. Therefore, in this paper, we aim to study the existence and uniqueness of global admissible conservative weak solutions to \eqref{eq1} by following the idea of Bressan and Constantin \cite{bc2,bcz}. Unlike \cite{ghr}, our proof does not need any additional assumptions.

The rest of our paper is as follows. In the second section, we give some basic equations about the periodic single-pulse equation and present our main results. In the third section, we deduce an equivalent semilinear system by introducing a new set of variables and then establish the global solutions to the semilinear system. In the fourth section, returning to the original variables, we obtain the global admissible conservative weak solution of the original equation. In the last section, by establishing an ordinary differential system, we prove that the global admissible conservative weak solution for \eqref{eq1} is unique.


\section{The basic equations and main results}
In this subsection, we give the basic equations and our main results.
Before that, we first introduce some definitions.
\begin{defi}\cite{st}
	Let $\mathrm{T}_n$ denote a circle of unit length. We say that $\mathcal{D}(\mathrm{T}_n)$ is the collection of all complex-valued infinitely differentiable functions on $\mathrm{T}_n$ if the locally convex topology in it is generated by the semi-norms
	\begin{align*}
		\|v\|_{\beta}=\sup_{x\in \mathrm{T}_n}\left|D^{\beta}v(x)\right|,
	\end{align*}
	where $$\mathrm{T}_n=\left\{~x~|~x\in\mathbb{R}_n,~x=(x_1,\cdots,x_n),~|x_i|\leq\pi,~i=1,\cdots,n\right\}$$ and $\beta=(\beta_1,\cdots,\beta_n)$ is an arbitrary multi-index with non-negative components.
\end{defi}
\begin{rema}\cite{e,st}
	Any function $v(x)\in\mathcal{D}(\mathrm{T}_n)$ can be represented as
	\begin{equation*}
		v(x)=\sum_{k\in \mathbb{Z}_n}a_ke^{ikx}\ \ \left(~convergence\ in\ \mathcal{D}(\mathrm{T}_n)~\right),
	\end{equation*}
	where $\{a_k\}_{k\in \mathbb{Z}_n}$ is a sequence of complex numbers such that
	\begin{equation}\label{per}
		|a_k|\leqq c_m(1+|k|)^{-m},\ k\in \mathbb{Z}_n,
	\end{equation}
	for all $m\in0,1,2,\cdots$. Here $c_m$ is an appropriate positive constant. It holds $a_k=\hat{v}(k),k\in \mathbb{Z}_n$. Conversely, if $\{a_k\}_{k\in \mathbb{Z}_n}$ satisfies \eqref{per} then $\sum\limits_{k\in \mathbb{Z}_n}a_ke^{ikx}$ convergence in $\mathcal{D}(\mathrm{T}_n)$. If $v(x)$ is its limit function then we have $\hat{v}(k)=a_k,k\in\mathbb{Z}_n$.
\end{rema}

For the sake of simplicity, we hereafter assume that the period $\mathrm{T}_1$ is the unit period $\mathbb S$ in one dimension.\\
\textbf{Notation.}
\begin{align*}
	\overline{w}=\int_{\mathbb{S}}w(x)dx~~~~~~~~\ &:\ the\ mean\ value\ of\ the\ real\ function\ w(x)\ over\ \mathbb{S}. ~~~~~~~~~~~~~~~~~~~~~~~~~~~~\nonumber\\
	\mathbb Pw(x)=w(x)-\overline{w}~~~~~~~~~~\ &:\ the\ orthogonal\ projection\ onto\ mean\ zero\ functions.~~~~~~~~~~~~~~~~~~~~~~~~~~~~\nonumber\\
	\partial_x^{-1}w(x)=\int_{0}^{x}{\mathbb P}(w)(y)dy\ &:\  the\ inverse\ of\ the\ differential\ operator.~~~~~~~~~~~~~~~~~~~~~~~~~~~~~~~~~~~~~
\end{align*}

By applying the orthogonal projection to \eqref{eq1}, we obtain
\begin{equation}\label{eq2}
\left\{\begin{array}{lll}
(u_t-u^2u_x)_x=u-uu_x^2-\overline{u-uu_x^2},&\quad t\geq 0,~~x\in \mathbb{R},\\
u(t,x)|_{t=0}=u_0(x),&\quad x\in \mathbb{R},\\
u(t,x+1)=u(t,x),&\quad t\geq0,~~x\in \mathbb{R}.
\end{array}\right.
\end{equation}
Integrating both sides of the first equation in \eqref{eq2} with respect to $x$, and choosing a specific boundary term, we have
\begin{equation}\label{vis}
\left\{\begin{array}{lll}
u_t-u^2\partial_xu=\partial_x^{-1}(u-uu_x^2)-f(t),&\quad t\geq 0,~~x\in \mathbb{R},\\
u(t,x)|_{t=0}=u_0(x),&\quad x\in \mathbb{R},\\
u(t,x+1)=u(t,x),&\quad t\geq0,~~x\in \mathbb{R}.
\end{array}\right.
\end{equation}
Here the boundary term
\begin{equation}\label{h}
f(t)=\frac{1}{1-h}\int_{\mathbb{S}}(1-u_x^2)\partial_x^{-1}(u-uu_x^2)(y){\ud}y
\end{equation}
is carefully selected such that the quantity $\int_{\mathbb{S}}u-uu_x^2{\ud}x $ is conserved when $h=\int_{\mathbb{S}}u_{0x}^2 {\ud}x\neq1$.\\
For smooth solutions, multiplying the first equation of \eqref{eq2} by $2u_x$, we get
\begin{equation}\label{cons}
(u_x^2)_t+\big(-u^2u_x^2\big)_x=\big(u^2\big)_x-2u_x\overline{u-uu_x^2}.
\end{equation}
Combing \eqref{vis} and \eqref{cons}, we deduce that the following quantities
\begin{align}
&E(t):=\int_{\mathbb{S}}u^2_x(t,x){\ud}x,\label{cons1}\\
&F(t):=\int_{\mathbb{S}}(u-uu_x^2)(t,x){\ud}x\label{cons2}
\end{align}
are constants in time.

\begin{defi}\label{defi}
Let $u_0\in H^1(\mathbb{S})$. We say $u(t,x)\in L^\infty\big(\mathbb{R}^+;H^{1}(\mathbb{S})\big)$ is a global conservative weak solution to the Cauchy problem \eqref{vis}, if $u(t,x)$ satisfies
\begin{align}\label{defi1}
\int_{\mathbb{R}^+}\int_{\mathbb S}\big(u\psi_{tx}+u^2u_x\psi_x\big)(t,x){\ud}x{\ud}t=\int_{\mathbb{R}^+}\int_{\mathbb S}\Big[\big(u-uu^2_{x}-\overline{u-uu_x^2}\big)\psi\Big](t,x){\ud}x{\ud}t+\int_{\mathbb{S}}u_{0x}(x)\psi(0,x){\ud}x
\end{align}
and
\begin{align}\label{globalweak1}
\int_{\mathbb{R}^+}\int_{\mathbb S}\big(u\psi_{t}-\frac{1}{3}u^3\psi_x\big)(t,x){\ud}x{\ud}t=-\int_{\mathbb{R}^+}\int_{\mathbb S} \Big[\big(\partial_x^{-1}(u-uu_x^2)-f(t)\big)\psi\Big](t,x){\ud}x{\ud}t-\int_{\mathbb{S}}u_0(x)\psi(0,x){\ud}x
\end{align}
for all $\psi\in C_{0}^\infty(\mathbb{R}^+;\mathcal{D}(\mathbb S))$. Moreover, the quantities $\int_{\mathbb S}u_x^2(t,x){\ud}x,\ \int_{\mathbb S}\big(u-uu_x^2)(t,x){\ud}x$ are conserved in time.
\end{defi}

\begin{defi}\label{defi3}
Let $u_0\in H^1(\mathbb{S})$. We say $u(t,x)\in L^\infty\big(\mathbb{R}^+;H^{1}(\mathbb{S})\big)$ is a global admissible conservative weak solution to the Cauchy problem \eqref{vis}, if $u(t,x)$ satisfies the following properties.
\begin{itemize}
	\item [1.] The function $u$ provides a solution to Cauchy problem (\ref{vis}) in the sense of Definition \ref{defi}.
	\item [2.] For all $\psi\in C_{0}^\infty(\mathbb{R}^+;\mathcal{D}(\mathbb S))$, $u(t,x)$ satisfies
	\begin{align}\label{globalweak2}
	\int_{\mathbb{R}^+}\int_{\mathbb S}\big(u^2_x\psi_{t}-u^2u^2_x\psi_{x}\big){\ud}z{\ud}t=-\int_{\mathbb{R}^+}\int_{\mathbb S}\big((u^2)_x-2u_x\overline{u-uu_x^2}\big)\psi{\ud}z{\ud}t-\int_{\mathbb{S}}u^2_{0x}(z)\psi(0,z){\ud}z.
	\end{align}
\end{itemize}


\end{defi}

The main theorem of this paper can be stated as follows.
\begin{theo}\label{th}
	Let $u_0(x)\in H^{1}(\mathbb{S})$. Suppose that $u_0(x)$ satisfies
	\begin{align}\label{cond1}
		\int_{\mathbb{S}}u_0-u_0u_{0x}^2{\ud}x=0.
	\end{align}
	Then the problem \eqref{eq1} has a unique global admissible conservative weak solution in the sense of Definition \ref{defi3}.
\end{theo}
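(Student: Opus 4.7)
The plan is to follow the Lagrangian reformulation approach of Bressan--Constantin \cite{bc2,bcz}, adapted to the periodic setting and to the nonlinearity $u^2 u_x$ appearing in \eqref{vis}. For the \emph{existence} half, I would first introduce a family of characteristics $y(t,\xi)$ determined by $y_t(t,\xi) = -u^2(t, y(t,\xi))$ together with new dependent variables in the spirit of
\begin{equation*}
  U(t,\xi) = u(t, y(t,\xi)), \qquad q(t,\xi) = (1 + u_x^2)(t,y)\, y_\xi, \qquad w(t,\xi) = \frac{2 u_x}{1 + u_x^2}(t,y)\sqrt{q},
\end{equation*}
or any equivalent choice that desingularises $u_x$. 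Using \eqref{vis} together with the energy identity \eqref{cons}, a direct computation should recast the problem as a semilinear system in $(y, U, q, w)$ whose right-hand side is locally Lipschitz in a suitable Banach space of periodic functions of $\xi$; here the conservation laws \eqref{cons1}--\eqref{cons2} and the specific choice of boundary term $f(t)$ in \eqref{h} are what turn the nonlocal quantities $\partial_x^{-1}(u - u u_x^2)$ and $\overline{u - u u_x^2}$ into Lipschitz functionals of the Lagrangian variables. Standard ODE theory will then yield a local solution, and the a~priori bounds following from the fact that $E(t)$ and $F(t)$ are constant in time, together with the compatibility condition \eqref{cond1}, prevent finite-time breakdown and give a global solution of the semilinear system.

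Next, I would project back to Eulerian coordinates by setting $u(t,x) = U(t,\xi)$ whenever $x = y(t,\xi)$, verify that this definition is independent of the choice of $\xi$ in a level set of $y(t,\cdot)$, and then, by integration by parts in the Lagrangian variables, check that the resulting $u$ satisfies the weak formulations \eqref{defi1} and \eqref{globalweak1} as well as the admissibility identity \eqref{globalweak2}. The conservation of $\int_{\mathbb S} u_x^2\,\ud x$ and of $\int_{\mathbb S}(u - u u_x^2)\,\ud x$ is automatic from the conservation properties of the Lagrangian variables, so the result of this step is a global admissible conservative weak solution in the sense of Definition \ref{defi3}.

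The \emph{uniqueness} half is the delicate one and I expect it to be the main obstacle. Given an arbitrary admissible conservative weak solution $u$, I would reconstruct a canonical characteristic $t \mapsto y(t,\bar x)$ through each initial point $\bar x$ by solving an ODE of the form $\dot y = -u^2(t, y)$, interpreted via the energy variable $\xi$ chosen so that the cumulative density $\int_0^{y(t,\xi)}(1 + u_x^2)\,\ud x$ is an affine function of $\xi$. This reparametrisation makes the characteristic equation well-posed even at times when $u_x$ concentrates, which is precisely where uniqueness in Eulerian coordinates fails. One then shows that the Lagrangian variables associated with any such $u$ solve the same semilinear ODE system as in the existence step, with the same initial data; uniqueness for that ODE system then forces $u$ to coincide with the solution constructed above. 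The admissibility condition \eqref{globalweak2} is decisive in this argument: it rules out the spurious concentration of energy that would otherwise allow two distinct weak solutions to share the same datum, and it is for exactly this reason that no additional Lipschitz metric on the energy space (as in \cite{ghr}) has to be imposed.
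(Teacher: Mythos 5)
Your proposal follows essentially the same route as the paper: the same Lagrangian change of variables leading to a semilinear ODE system in a Banach space for existence, and for uniqueness the same energy-reparametrised characteristic (the paper's variable $\beta$ in \eqref{flow2}, singled out by coupling $\dot y=-u^2(t,y)$ with the balance law for $\int_0^{y}u_x^2\,{\ud}z$ extracted from the admissibility identity \eqref{globalweak2} via Lipschitz test functions). The only small discrepancy is your attribution of the global-in-time bound to the condition \eqref{cond1}: in the paper global existence of the semilinear system holds for every $u_0\in H^1(\mathbb S)$, and \eqref{cond1} serves only to force $P(1)=0$ so that \eqref{vis} is equivalent to the original problem \eqref{eq1}.
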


\section{An equivalent semilinear system and global solution of the system}
\subsection{An equivalent semilinear system}

Define the spaces $I,\ X$ as
\begin{align*}
&I=\big\{g\in H_{loc}^{1}(\mathbb{R})~~|~~g(\eta+1)=g(\eta)+1,~~\text{for all}~~\eta\in\mathbb{R}\big\},\\
&X=I\times H^1(\mathbb{S})\times L^{\infty}(\mathbb{S})\times L^{\infty}(\mathbb{S})\times L^{\infty}(\mathbb{S})
\end{align*}
with the norms $\|g\|_{I}=\|g\|_{H^1}$ and $\|(y,U,V,W,Q)\|_{X}=\|y\|_{H^1}\times\|U\|_{H^1}\times\|V\|_{L^\infty}\times\|W\|_{L^\infty}\times\|Q\|_{L^\infty}$.

Assume that $u$ is smooth and periodic. Let $y:\mathbb{R}\rightarrow I, t\mapsto y(t,\cdot)$ be the characteristic as the solutions of
\begin{equation}\label{char}
y_t(t,\xi)=-u^2\big(t,y(t,\xi)\big),\ y(0,\xi)=y_0(\xi),
\end{equation}
where $y_0(\xi)$ will be given below.\\
Introduce some new variables
\begin{align*}
&U(t,\xi)=u\big(t,y(t,\xi)\big),\quad V(t,\xi)=\frac{1}{1+u_x^2\circ y},\\
&W(t,\xi)=\frac{u_x\circ y}{1+u_x^2\circ y},\quad Q(t,\xi)=(1+u_x^2\circ y)y_{\xi}.
\end{align*}
Owing to the characteristic \eqref{char} and the first equation of \eqref{vis}, we can deduce
\begin{align*}
U_t(t,\xi)=K(\xi)-P(1)y(\xi)-\frac{1}{1-h}\int_{\mathbb{S}}\big(2QV-Q\big)(\eta)\left(K(\eta)-P(1)y(\eta)\right){\ud}\eta,
\end{align*}
where $K(\xi)=\int_{y^{-1}(t,0)}^\xi\left(2UQV-UQ\right)(\eta){\ud}\eta$ and $P(1)=\int_{\mathbb{S}}(2UQV-UQ)(\zeta){\ud}\zeta$ is a function of time $t$.

Similarly, using \eqref{vis},\ \eqref{cons} and \eqref{char}, we obtain
\begin{align*}
&V_t(t,\xi)=-2UW+2WVP(1),\\
&W_t(t,\xi)=2UV-U-2V^2P(1)+VP(1),\\
&Q_t(t,\xi)=-2WQP(1).
\end{align*}
In a word, we formally derive a equivalent semilinear system to \eqref{vis}
\begin{equation}\label{semi}
\left\{\begin{array}{lll}
y_t(t,\xi)=-U^2,\\
U_t(t,\xi)=K(\xi)-P(1)y(\xi)-\frac{1}{1-h}\int_{\mathbb{S}}\big(2QV-Q\big)(\eta)\big(K(\eta)-P(1)y(\eta)\big){\ud}\eta,\\
V_t(t,\xi)=-2UW+2WVP(1),\\
W_t(t,\xi)=2UV-U-2V^2P(1)+VP(1),\\
Q_t(t,\xi)=-2WQP(1).
\end{array}\right.
\end{equation}
It follows that
\begin{equation}\label{semi1}
\left\{\begin{array}{lll}
y_{\xi t}(t,\xi)=-2UU_{\xi},\\
U_{\xi t}(t,\xi)=UQ(2V-1)-P(1)y_{\xi},\\
V_t(t,\xi)=-2UW+2WVP(1),\\
W_t(t,\xi)=2UV-U-2V^2P(1)+VP(1),\\
Q_t(t,\xi)=-2WQP(1).
\end{array}\right.
\end{equation}

\subsection{Global solution of the semilinear system}

In this subsection, we turn our attention to finding a global solution of system \eqref{semi}. Since the spaces $I,\ X$ are not Banach spaces, in order to get the global solution, we need to construct a suitable Banach space. Set $\gamma=y-Id$ where $Id$ denotes the identity. Then, the map $y\mapsto\gamma$ is obviously a bijection between $I$ and $H^{1}(\mathbb{S})$. Therefore, there is a bijection $(y,U,V,W,Q)\mapsto(\gamma,U,V,W,Q)$ between $X$ and $Y:=H^1(\mathbb{S})\times H^1(\mathbb{S})\times L^{\infty}(\mathbb{S})\times L^{\infty}(\mathbb{S})\times L^{\infty}(\mathbb{S})$ which is a Banach space.
Hence, system \eqref{semi} becomes the following equivalent form
\begin{equation}\label{semili}
\left\{\begin{array}{lll}
\gamma_t(t,\xi)=-U^2,\\
U_t(t,\xi)=K(\xi)-P(1)\big(\gamma+Id\big)(\xi)-\frac{1}{1-h}\int_{\mathbb{S}}\big(2QV-Q\big)(\eta)\Big(K(\eta)-P(1)\big(\gamma+Id\big)(\eta)\Big){\ud}\eta,\\
V_t(t,\xi)=-2UW+2WVP(1),\\
W_t(t,\xi)=2UV-U-2V^2P(1)+VP(1),\\
Q_t(t,\xi)=-2WQP(1).
\end{array}\right.
\end{equation}
We will prove that system \eqref{semili} as the ordinary differential equations is local well-posed in the Banach space $Y$. Before that, we choose a appropriate initial data  $(y_0,U_0,V_0,W_0,Q_0)$ as
\begin{equation}\label{data}
\left\{\begin{array}{lll}
y_0(\xi)+\int_0^{y_0(\xi)}u_{0x}^2{\ud}x=(1+h)\xi,\\
U_0(\xi)=u_0\circ y_0(\xi),\\
V_0(\xi)=\frac{1}{1+u_{0x}^2\circ y_0(\xi)},\\
W_0(\xi)=\frac{u_{0x}\circ y_0(\xi)}{1+u_{0x}^2\circ y_0(\xi)},\\
Q_0(\xi)=(1+u_{0x}^2\circ y_0)y_{0\xi}(\xi)=1+h,
\end{array}\right.
\end{equation}
where $h=\int_{\mathbb{S}}u^2_{0x}(x){\ud}x$.

Note that $u_0\in H^1(\mathbb{S})$. Then the function $y_0(\xi)$ is well-defined as the map $y_0\mapsto y_0(\xi)+\int_0^{y_0(\xi)}u_{0x}^2{\ud}x$ is continuous, strictly increasing. It is straightforward to verify that $y_0(0)=0, y_{0\xi}>0$ and $y_0(\xi)\in I$, which follows that $U_0\in W^{1,\infty}(\mathbb{S}),\ V_0\in L^\infty(\mathbb{S}),\ W_0\in L^\infty(\mathbb{S}),\ Q_0\in {L^\infty(\mathbb{S})}$. Therefore, $$\big(\gamma_0,U_0,V_0,W_0,Q_0\big)\in[W^{1,\infty}(\mathbb{S})]^2\times[L^\infty(\mathbb{S})]^3\subset Y.$$

The following lemma gives the local existence of solution to system \eqref{semili} with initial data \eqref{data}.
\begin{lemm}\label{local}
Let $u_0\in H^{1}({\mathbb{S}})$. Then there exists a time $T>0$ such that problem \eqref{semili}-\eqref{data} has a solution $\big(\gamma(t),U(t),V(t),W(t),Q(t)\big)$ in $L^\infty([0,T];Y)$.
\end{lemm}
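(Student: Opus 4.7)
The plan is to rewrite system \eqref{semili} as an abstract ordinary differential equation $\frac{d}{dt}Z = F(Z)$, $Z(0) = Z_0$, on the Banach space $Y$, with $Z = (\gamma, U, V, W, Q)$ and $F$ given by the right-hand sides. Local existence then reduces to an application of the Cauchy--Lipschitz (Picard--Lindel\"of) theorem in a Banach space, provided $F$ is well-defined on an open neighbourhood of $Z_0$ in $Y$ and Lipschitz continuous on bounded subsets of that neighbourhood. Note that the initial data from \eqref{data} already satisfies $Z_0 \in [W^{1,\infty}(\mathbb{S})]^2 \times [L^\infty(\mathbb{S})]^3 \subset Y$, as the authors observed, so the datum sits comfortably inside the phase space.

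I would first check that $F$ maps into $Y$. Using the one-dimensional embedding $H^1(\mathbb{S}) \hookrightarrow L^\infty(\mathbb{S})$, every pointwise product appearing on the right-hand side ($U^2$, $UW$, $UV$, $V^2$, $WQ$, $2UQV - UQ$, etc.) lies in $L^\infty(\mathbb{S})$ with norm controlled by a polynomial in $\|Z\|_Y$. In particular the scalar $P(1) = \int_{\mathbb{S}}(2UQV - UQ){\ud}\zeta$ satisfies $|P(1)| \leq C\|Z\|_Y^3$, so $V_t, W_t, Q_t \in L^\infty(\mathbb{S})$ and $\gamma_t = -U^2 \in H^1(\mathbb{S})$. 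For the $U$-component I would use the differentiated identity $(U_t)_\xi = UQ(2V-1) - P(1)(\gamma_\xi + 1)$ from \eqref{semi1} to place the $\xi$-derivative in $L^2(\mathbb{S})$, and check that $U_t$ itself is $1$-periodic because $K(\xi+1) - K(\xi) = P(1)$ exactly cancels the jump $-P(1)[(\gamma+Id)(\xi+1) - (\gamma+Id)(\xi)] = -P(1)$; boundedness on one period then gives $U_t \in H^1(\mathbb{S})$.

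Next I would establish the local Lipschitz bound. For $Z^1, Z^2$ in a ball of radius $R$ in $Y$, writing each component of $F(Z^1) - F(Z^2)$ as a telescoping sum of products and applying the algebra property $\|fg\|_{H^1(\mathbb{S})} \leq C\|f\|_{H^1}\|g\|_{H^1}$ together with the trivial bound $\|fg\|_{L^\infty} \leq \|f\|_{L^\infty}\|g\|_{L^\infty}$ yields $\|F(Z^1) - F(Z^2)\|_Y \leq C(R)\|Z^1 - Z^2\|_Y$. The two averaged/integral terms contribute only scalar Lipschitz factors and create no additional difficulty.

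The genuine technical point---and the main obstacle I would expect---is giving an unambiguous meaning to $y^{-1}(t,0)$ in the definition of $K$, which requires $y(t,\cdot) = \gamma(t,\cdot) + Id$ to be strictly increasing as a function of $Z$. From \eqref{data} one has $y_{0\xi} = (1+h)V_0 > 0$ almost everywhere, and the evolution equation $y_{\xi t} = -2UU_\xi$ shows $y_\xi$ varies continuously in time, so on a small ball in $Y$ about $Z_0$ and a small time interval the relation $y(t,\xi) = 0$ determines a unique $\xi$ that depends Lipschitz-continuously on $Z$ via the implicit function theorem. With this subtlety handled, the Picard--Lindel\"of theorem in the Banach space $Y$ yields $T > 0$ and a unique solution $Z \in C([0,T]; Y) \subset L^\infty([0,T]; Y)$, which is the conclusion of the lemma.
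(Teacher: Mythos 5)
Your proposal follows essentially the same route as the paper: recast \eqref{semili} as an abstract ODE on the Banach space $Y$, show the right-hand side is Lipschitz continuous on bounded sets $B_M\subset Y$ (the paper checks only the second, hardest component and then invokes the standard ODE theory in Banach spaces), and conclude by Picard--Lindel\"of. Your extra care about the well-definedness and periodicity issues --- the cancellation $K(\xi+1)-K(\xi)=P(1)$ and the meaning of the base point $y^{-1}(t,0)$ in $K$ --- addresses points the paper passes over in silence, but it is a refinement of the same argument rather than a different one.
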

\begin{proof}
Note that the initial data $\big(\gamma_0,U_0,V_0,W_0,Q_0\big)\in[W^{1,\infty}(\mathbb{S})]^2\times[L^\infty(\mathbb{S})]^3\subset Y$. In order to prove the local existence, we only need to demonstrate that the right side of \eqref{semili} is Lipschitz continuous on every bounded set $B_M\subset Y$ with
$$B_M=\Big\{~\big(\gamma(t),U(t),V(t),W(t),Q(t)\big)\in Y~|~~~\|\big(\gamma(t),U(t),V(t),W(t),Q(t)\big)\|_Y\leq M\Big\}.$$
Here we just verify the second equation of the right side of \eqref{semili} is Lipschitz continuous, since the others are similar and more easier. Recall that $\xi\in\mathbb{S}$, it is obvious that the map $UQV+\frac{1}{2}Q(1-V)$ is Lipschitz continuous from $B_M$ to $L^2(\mathbb{S})$. Then $K(\xi)$ is Lipschitz continuous from $B_M$ to $H^{1}(\mathbb{S})$. In a similar way, $P(1)\big(\gamma+Id\big)$ and $\int_{\mathbb{S}}[K(\eta)-P(1)(\gamma+Id)(\eta)](QV)(\eta){\ud}\eta$ are Lipschitz continuous from $B_M$ to $H^{1}(\mathbb{S})$. Therefore, the Lipschitz continuity is true. Thus, by virtue of the standard ordinary differential equations theory in Banach spaces, there exists a solution $\big(\gamma(t),U(t),V(t),W(t),Q(t)\big)$ to Cauchy problem \eqref{semili}-\eqref{data} on small time interval $[0,T]$ with $T>0$.
\end{proof}
\begin{rema}\label{locall}
Recall that the map $y\mapsto\gamma$ is a bijection between $I$ and $H^{1}(\mathbb{S})$. Then we know that system \eqref{semi} as well has a local solution $\big(y(t),U(t),V(t),W(t),Q(t)\big)$ in $L^\infty([0,T];X)$ from the above lemma.
\end{rema}
\begin{theo}\label{locall1}
Following \cite{hr} and the above lemma, we see that the local solution $\big(\gamma(t),U(t),V(t),W(t),Q(t)\big)$ also belongs to $L^\infty([0,T];[W^{1,\infty}(\mathbb{S})]^2\times[L^\infty(\mathbb{S})]^3)$ with the initial data $\big(\gamma_0,U_0,V_0,W_0,Q_0\big)$, so that
$$\big(\gamma_{\xi}(t),U_{\xi}(t),V(t),W(t),Q(t)\big)\in[L^\infty(\mathbb{S})]^5$$ is a solution to system \eqref{semi1}.
Moreover, we can assert that $y_\xi \geq 0$ and $\rm{meas}(\mathcal{A})=0,\ \rm{meas}(\mathcal{N})=0$ where
\begin{align*}
\mathcal{A}=\Big\{(t,\xi)\in[0,T]\times\mathbb{R}~\big|~y_{\xi}(t,\xi)=0\Big\},\\
\mathcal{N}=\Big\{t\in[0,T]~\big|~y_{\xi}(t,\xi)=0,\ \xi\in\mathbb{R}\Big\}.
\end{align*}
\end{theo}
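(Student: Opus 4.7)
The plan is to proceed in three stages. First, to upgrade the regularity of $\gamma$ and $U$ to $W^{1,\infty}(\mathbb{S})$, I would differentiate the first two equations of \eqref{semili} in $\xi$ to recover \eqref{semi1}. The right-hand sides form a linear ODE system in $(\gamma_\xi, U_\xi)$ whose coefficients are built from $U$, $V$, $W$, $Q$, and $P(1)$, all in $L^\infty(\mathbb{S})$ by Lemma \ref{local} (using $H^1\hookrightarrow L^\infty$ for $U$, and the product/integral structure for $K$ and $P(1)$). Since $\gamma_{0\xi} = y_{0\xi} - 1$ and $U_{0\xi} = W_0 Q_0$ are bounded by \eqref{data}, standard Banach-space ODE theory in $L^\infty(\mathbb{S})$ propagates the bound up to $t=T$.

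Second, the sign $y_\xi\ge 0$ would follow from two algebraic facts. The scalar equation $Q_t = -2WP(1)Q$ integrates explicitly to $Q(t,\xi) = (1+h)\exp(-2\int_0^t WP(1)\,ds) > 0$ for all $(t,\xi)$. Next, a direct computation substituting the ODEs for $V,W$ gives $\partial_t(V^2+W^2-V)\equiv 0$, which combined with $V_0^2+W_0^2-V_0=0$ from \eqref{data} yields the invariant $V^2+W^2=V$, forcing $V\in[0,1]$. The identity $y_\xi=QV$ (respectively $U_\xi=QW$) is then obtained by showing that $QV$ (respectively $QW$) satisfies the same linear ODE as $y_\xi$ (respectively $U_\xi$) with matching initial data, where the invariant is used to cancel the cubic terms. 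This yields $y_\xi=QV\ge 0$.

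Third, since $Q>0$ everywhere, we have $\mathcal{A} = \{(t,\xi) : V(t,\xi) = 0\}$. The constraint $V^2+W^2=V$ motivates the parameterization $V=\cos^2\theta$, $W=\sin\theta\cos\theta$, under which $\theta(\cdot,\xi)$ is a $C^1$ function of $t$ satisfying $\theta_t = U - \cos^2\theta\,P(1)$. At a zero of $\cos\theta$ one has $\theta_t=U$, so whenever $U\ne 0$ the zero is isolated in $t$. A Fubini decomposition $\mathrm{meas}(\mathcal{A}) = \int_0^T \mathrm{meas}(\mathcal{A}_t)\,dt$, together with $\mathrm{meas}(\mathcal{A}_0)=0$ inferred from the definition of $y_0$ in \eqref{data}, and a propagation argument modeled on \cite{hr}, would then give $\mathrm{meas}(\mathcal{A})=0$. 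Finally $\mathrm{meas}(\mathcal{N})=0$ follows because $\mathcal{N}$ lies in the set of $t$ with $\mathrm{meas}(\mathcal{A}_t)>0$, which is null by Fubini applied to $\mathcal{A}$.

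The main obstacle is the degenerate scenario where $V=W=U=0$ simultaneously at some $(t_0,\xi)$: there $\theta_t(t_0)=0$ and the "isolated zero" mechanism collapses, so one must use the nonlocal $U$-equation together with the conservation identities \eqref{cons1}--\eqref{cons2} to rule out accumulation of such points in positive Lebesgue measure. This is precisely where the adaptation of the strategy from \cite{hr} is essential.
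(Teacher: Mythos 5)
Your stages 1 and 2 are correct and are essentially the computations the paper itself relies on, although the paper places them elsewhere: the $W^{1,\infty}$ propagation via the linear system \eqref{semi1} with $L^\infty$ coefficients is the content implicit in Lemma \ref{local}, and the invariant $W^2+V^2=V$, the explicit positive formula for $Q$, and the identities $y_\xi=QV$, $U_\xi=WQ$ are exactly \eqref{bound}, \eqref{bound1} and \eqref{QQ}, which the paper only verifies later, inside the proof of Theorem \ref{global1}. Your derivations check out: $(V^2+W^2)_t=V_t$ follows directly from \eqref{semi}, the differences $U_\xi-WQ$ and $VQ-y_\xi$ satisfy a linear homogeneous system with vanishing initial data (the invariant is indeed needed to simplify $(WQ)_t$), and $V_0^2+W_0^2=V_0$ from \eqref{data}. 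This gives $y_\xi=QV\ge 0$ with $Q>0$, as claimed.

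The genuine gap is in stage 3, and it is exactly where you flag it. Your parameterization is consistent ($\theta_t=U-VP(1)$ is compatible with both the $V$- and $W$-equations), and at a zero of $V$ one has $W=0$, $V_t=0$ and
\begin{align*}
V_{tt}=2W_t\big(VP(1)-U\big)+2W\big(V_tP(1)-U_t\big)=2U^2,
\end{align*}
so the zero is isolated in $t$ only when $U\neq 0$ there. Unlike the Camassa--Holm case (where the analogous second derivative equals $\tfrac12 H_\xi>0$ unconditionally), here the degenerate case $U=V=W=0$ gives $V_{tt}=V_{ttt}=0$ and $V_{tttt}=6U_t^2$, so one is forced into an induction on $t$-derivatives of $U$; since $U_t$ is given by a nonlocal functional of the whole solution, there is no evident algebraic obstruction to all of these vanishing, and neither the conservation laws \eqref{cons1}--\eqref{cons2} nor a density-point argument (which only yields $U=U_t=0$ a.e.\ on a hypothetical positive-measure zero set) closes the argument as written. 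So your proposal does not actually prove $\mathrm{meas}(\mathcal{A})=\mathrm{meas}(\mathcal{N})=0$. To be fair, the paper proves nothing here either: Theorem \ref{locall1} is stated with only the phrase ``following \cite{hr}'' and no argument for the measure-zero assertions ever appears, so your attempt is at least as complete as the paper's treatment and more explicit about where the real difficulty lies.
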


Next, we will extend the local solution to the global solution.
\begin{theo}\label{global1}
Let $u_0\in H^{1}({\mathbb{S}})$. Then the local solution $\big(\gamma(t),U(t),V(t),W(t),Q(t)\big)$ to problem \eqref{semili}-\eqref{data} is global. Moreover, $$\big(\gamma(t),U(t),V(t),W(t),Q(t)\big)\in[W^{1,\infty}(\mathbb{S})]^2\times[L^\infty(\mathbb{S})]^3$$
for all time $t\geq0$.
\end{theo}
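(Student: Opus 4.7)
The plan is to show that the local solution from Lemma~\ref{local} and Theorem~\ref{locall1} cannot blow up in finite time in the $Y$-norm, so that the standard continuation principle for ODEs in a Banach space will extend it to all $t\ge 0$. The crucial observation is that the compatibility condition \eqref{cond1} will force the scalar
\[
P(1)(t)\;:=\;\int_{\mathbb{S}}\bigl(2UQV-UQ\bigr)(t,\zeta)\,{\ud}\zeta
\]
to vanish identically in $t$, which collapses \eqref{semili} to a much simpler system.

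First I would establish, and then preserve along the flow, the three algebraic identities $V^2+W^2=V$, $U_\xi=WQ$, and $y_\xi=VQ$. All three hold at $t=0$ by direct inspection of \eqref{data}. Preservation of $V-V^2-W^2=0$ is a one-line computation in which the $P(1)$-dependent terms cancel identically; a short coupled argument then deduces preservation of $U_\xi-WQ$ and $y_\xi-VQ$ from the preservation of $V-V^2-W^2$. These identities yield the pointwise bounds $0\le V\le 1$ and $|W|\le 1/2$, and combined with the periodicity $U(\cdot,\xi+1)=U(\cdot,\xi)$ inherited from $y\in I$, they give
\[
\int_{\mathbb{S}}UWQ\,{\ud}\xi=\int_{\mathbb{S}}U\,U_\xi\,{\ud}\xi=0,\qquad \int_{\mathbb{S}}U^2 WQ\,{\ud}\xi=\int_{\mathbb{S}}U^2 U_\xi\,{\ud}\xi=0.
\]

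Differentiating $P(1)(t)$ along \eqref{semili}, the contributions from $K(\xi)$, $P(1)\,y(\xi)$, and the constant-in-$\xi$ correction $c(t)=\frac{1}{1-h}\int_{\mathbb{S}}(2QV-Q)(K-P(1)y)\,{\ud}\eta$ cancel exactly against one another, thanks to the choice of the factor $1/(1-h)$ and the identity $\int_{\mathbb{S}}Q(2V-1)\,{\ud}\xi=2-(1+h)=1-h$. The remaining terms simplify to $2P(1)\int_{\mathbb{S}}UWQ\,{\ud}\xi-4\int_{\mathbb{S}}U^2WQ\,{\ud}\xi$, both of which vanish by the identities above. Hence $\frac{d}{dt}P(1)\equiv 0$; a change of variable $x=y_0(\xi)$ turns \eqref{cond1} into $P(1)(0)=0$, so $P(1)(t)\equiv 0$ for all $t\ge 0$.

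With $P(1)\equiv 0$ the system \eqref{semili} reduces to $\gamma_t=-U^2$, $U_t=K$, $V_t=-2UW$, $W_t=U(2V-1)$, $Q_t=0$; in particular $Q(t,\xi)\equiv 1+h$. Then $|K(\xi)|\le (1+h)\|U(t)\|_{L^\infty}$, and Gronwall applied to $U_t=K$ gives $\|U(t)\|_{L^\infty}\le \|U_0\|_{L^\infty}e^{(1+h)t}$. The bound $|U_{\xi t}|=|UQ(2V-1)|\le(1+h)\|U\|_{L^\infty}$ yields linear-in-time control of $\|U_\xi\|_{L^\infty}$, and direct integration of $\gamma_t=-U^2$ and $\gamma_{\xi t}=-2UU_\xi$ controls $\|\gamma\|_{W^{1,\infty}}$. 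All five unknowns therefore remain bounded in $[W^{1,\infty}(\mathbb{S})]^2\times[L^\infty(\mathbb{S})]^3$ on every bounded time interval, proving global existence. I expect the main obstacle to be the verification that $P(1)\equiv 0$: it requires the three preserved algebraic identities simultaneously, and hinges on the delicate cancellation between $K$, $P(1)y$, and $c(t)$ that is built into the very definition of $f(t)$ in \eqref{h}; once this is in hand, the remaining Gronwall estimates are routine.
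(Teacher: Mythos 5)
Your overall architecture (preserve the algebraic identities $V^2+W^2=V$, $U_\xi=WQ$, $y_\xi=VQ$; deduce $0\le V\le1$, $|W|\le\tfrac12$; show $\tilde F(t)=P(1)$ is conserved; then close with a priori bounds and the continuation principle) is the same as the paper's, and your computation that $\frac{\ud}{\ud t}P(1)=0$ — including the cancellation via $\int_{\mathbb S}Q(2V-1)\,{\ud}\xi=1-h$ and $\int UU_\xi=\int U^2U_\xi=0$ — is correct and is exactly the conservation law the paper asserts.

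However, there is a genuine gap: Theorem \ref{global1} is stated for arbitrary $u_0\in H^1(\mathbb S)$, with no assumption of the compatibility condition \eqref{cond1}. Your proof invokes \eqref{cond1} to force $P(1)\equiv 0$, and this is not cosmetic — it is load-bearing for your bound on $\|U\|_{L^\infty}$. If $P(1)\neq 0$, the equation for $U_t$ retains the term $-P(1)\,y(\xi)$, and since $\|y(t,\cdot)\|_{L^\infty}$ is only controlled by $1+\int_0^t\|U\|_{L^\infty}^2\,{\ud}s$, your Gronwall inequality becomes superlinear (Riccati-type) and does not by itself exclude finite-time blow-up. The paper avoids this entirely: it first bounds $Q$ by $(1+h)e^{\tilde F_0 T}$ (your $Q\equiv 1+h$ is the special case $P(1)=0$), hence $\|U_\xi\|_{L^\infty}$, and then pins down $U$ itself \emph{without Gronwall} by pairing the conserved quantity $\tilde F_0=\int(2UQV-UQ)\,{\ud}\eta$ against $U(\xi)\int(2QV-Q)\,{\ud}\eta=U(\xi)(1-h)$, which gives $|U(\xi)|\le(B+|\tilde F_0|)/|1-h|$ directly. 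To repair your argument for the theorem as stated, either adopt this conserved-quantity bound for $U$, or restrict the theorem's hypotheses to include \eqref{cond1} (which would still suffice for the paper's main Theorem \ref{th}, but proves less than what Theorem \ref{global1} claims). A minor additional slip: even when $P(1)\equiv0$ the second equation does not reduce to $U_t=K$; the spatially constant correction $\frac{1}{1-h}\int_{\mathbb S}(2QV-Q)K\,{\ud}\eta$ survives. It is bounded by the same quantities, so your Gronwall estimate goes through with an adjusted constant, but the "reduced system" as you wrote it is not the correct one.
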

\begin{proof}
To obtain the global existence, without loss of generality, it suffices to demonstrate the solution $\big(\gamma(t),U(t),V(t),W(t),Q(t)\big)$ is uniformly bounded on any bounded time interval $[0,T]$ with $T>0$.

We first claim that
\begin{align}
&W^2+V^2=V,\quad \text{for a.e. $\xi$,}\quad\label{bound}\\
&y_{\xi}=VQ,\quad U_{\xi}=WQ,\quad \text{for a.e. $\xi$.}\label{bound1}
\end{align}
Taking advantage of \eqref{semi} and \eqref{semi1}, we see
\begin{align}
&\big(W^2+V^2\big)_t=V_t,\label{bou3}\\
&\big(U_{\xi}-WQ\big)_t=P(1)\big(VQ-y_{\xi}\big),\label{bou4}\\
&\big(VQ-y_{\xi}\big)_t=WQ-U_{\xi}.\label{bou5}
\end{align}
Observe that at initial time $t=0$, $W^2_{0\xi}+V^2_{0\xi}=V_{0\xi}$, $U_{0\xi}=W_0Q_0$ and $y_{0\xi}=V_0Q_0$, then with \eqref{bou3}, we can prove \eqref{bound}. If $P(1)=0$, then \eqref{bound1} is obviously true. If not, differentiating \eqref{bou4} with respect to $t$ and then using the elliptic equations theory, we can prove \eqref{bound1}.

Next, we prove the conservative laws. Set $\tilde{E}(t):=\int_{\mathbb{S}}(Q-QV)(t,\xi){\ud}\xi,\ \tilde{F}(t):=\int_{\mathbb{S}}(2UQV-UQ)(t,\xi){\ud}\xi$. It's easy to deduce that $\frac{\ud}{{\ud}t}\tilde{E}=\frac{\ud}{{\ud}t}\tilde{F}=0$, that means $\tilde{E}(t)$ and $\tilde{F}(t)$ remain constants in time. So we have
\begin{align}\label{F}
\tilde{E}(t)&=\tilde{E}(0):=\tilde{E}_0=\int_{\mathbb{S}}(Q_0-Q_0V_0)(\xi){\ud}\xi=\int_{\mathbb{S}}(u^2_{0x}\circ y_0) y_{0\xi}{\ud}\xi=\int_{\mathbb{S}}u^2_{0x}{\ud}x,\\
P(1)&=\tilde{F}(t)=\tilde{F}(0):=\tilde{F}_0=\int_{\mathbb{S}}\big(1-u^2_{0x}\circ y_0\big)(u_0\circ y_0)y_{0\xi}{\ud}\xi=\int_{\mathbb{S}}u_0-u_0u^2_{0x}{\ud}x.
\end{align}

We finally want to prove that the local solution is uniformly bounded on any bounded time interval.
\eqref{bound} implies that
\begin{align}\label{vw}
0\leq V\leq1\quad \text{and}\quad \big|W\big|\leq\frac 1 2,
\end{align}
whence $V(t,\xi),\ W(t,\xi)$ is uniformly bounded in $L^\infty([0,T]\times{\mathbb{S}})$.

Notice that $0<1+h=Q_0(\xi)\in L^\infty({\mathbb{S}})$. By solving the fifth equation of \eqref{semili}, we can find
\begin{align}\label{QQ}
0<Q(t,\xi)=Q_0(\xi)e^{\int_0^t\big(-2WP(1)\big){\ud}{\tau}}\leq(1+h)e^{\tilde{F}_0T}.
\end{align}
Hence, $Q(t,\xi)\in L^\infty([0,T]\times{\mathbb{S}})$.

In addition, \eqref{bound1}, \eqref{vw} and \eqref{QQ} infer that
\begin{align}\label{UU}
&\|U_{\xi}\|_{L^\infty}\leq\frac 1 2(1+h)e^{\tilde{F}_0T},\notag\\
&\|\gamma_{\xi}\|_{L^\infty}\leq\|y_{\xi}\|_{L^\infty}+1\leq(1+h)e^{\tilde{F}_0T}+1.
\end{align}
Moreover, using the conserved quantities $\tilde{E}(t),\ \tilde{G}(t)$ and $\tilde{F}(t)$, we can get the uniform boundedness of $u$. In fact, for any fixed $\xi,\eta\in{\mathbb{S}}$,
\begin{align*}
\int_{\mathbb{S}}\big(U(\xi)-U(\eta)\big)\big(2QV-Q\big)(\eta){\ud}\eta=&U(\xi)\int_{\mathbb{S}}\big(2QV-Q\big)(\eta){\ud}\eta-\int_{\mathbb{S}}(2UQV-UQ)(\eta){\ud}\eta\\
=&U(\xi)(1-h)-\tilde{F}_0.
\end{align*}
On the other hand, we discover
\begin{align*}
\Big|\int_{\mathbb{S}}\big(U(\xi)-U(\eta)\big)\big(2QV-Q\big)(\eta){\ud}\eta\Big|&\leq \Big|\int_{\mathbb{S}}\int_{\eta}^{\xi}U_{\xi}{\ud}\zeta\big(2QV-Q\big){\ud}\eta\Big|\\
&\leq\|U_{\xi}\|_{L^\infty}\big(\|2QV\|_{L^\infty}+\|Q\|_{L^\infty}\big)\\
&\leq\frac 3 2(1+h)^2e^{\tilde{F}_0T}\triangleq B.
\end{align*}
Thus, we conclude that
\begin{align}
|U(\xi)|\leq\frac{B+|\tilde{F}_0|}{|1-h|},~~~~~~~a.e.~~~\mathbb{S}.\label{uniform}
\end{align}
It follows that
\begin{align}
|\gamma(t,\xi)|\leq|y(t,\xi)-\xi|\leq |\bar{y}(\xi)-\xi|+\int_0^tU^2(\tau,\xi){\ud}\tau\leq h+T\frac{\big(B+|\tilde{F}_0|\big)^2}{(1-h)^2},~~~~~~~a.e.~~~\mathbb{S}.\label{uniformm}
\end{align}

Combing \eqref{vw}--\eqref{uniformm}, we deduce that the solution $\big(\gamma(t),U(t),V(t),W(t),Q(t)\big)$ remains bounded on any bounded time interval $[0,T]$ in $[W^{1,\infty}({\mathbb{S}})]^2\times[L^\infty({\mathbb{S}})]^3\subset Y$. This proves the theorem.
\end{proof}
\begin{rema}\label{global}
Similarly, for $u_0\in H^1(\mathbb{S})$, we know that system \eqref{semi} with initial data $(y_0,U_0,V_0,W_0,Q_0)$ as well has a global solution $\big(y(t),U(t),V(t),W(t),Q(t)\big)$ in $X$ for any time $t\geq0$.
\end{rema}

\section{Global admissible conservative weak solution for the original equation}
\par
In this section, we are going to prove the global existence of conservative weak solution to \eqref{vis}.

\begin{theo}\label{th4}
Let $u_0(x)\in H^{1}(\mathbb{S})$. Then the problem \eqref{vis} has a admissable conservative weak solution in the sense of Definition \ref{defi3}.
\end{theo}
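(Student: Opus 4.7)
The plan is to reconstruct $u(t,x)$ from the global Lagrangian solution $(y(t),U(t),V(t),W(t),Q(t))$ of system \eqref{semi} provided by Theorem \ref{global1} and Remark \ref{global}, and then verify the weak formulations in Definition \ref{defi3} via a change of variables $x=y(t,\xi)$.

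First I would define $u(t,x):=U(t,\xi)$ whenever $y(t,\xi)=x$. To see this is unambiguous, I would use the structural identities
\begin{equation*}
W^2+V^2=V,\qquad y_\xi=VQ,\qquad U_\xi=WQ
\end{equation*}
from \eqref{bound}--\eqref{bound1}. On any $\xi$-interval on which $y$ is constant we have $y_\xi=0$; since $Q>0$ by \eqref{QQ}, this forces $V=0$, hence $W^2=V-V^2=0$, hence $U_\xi=WQ=0$. Therefore $U$ is constant on level sets of $y(t,\cdot)$ and $u$ is well-defined. The relation $u_x\circ y=W/V$ (where $V>0$) gives $u_x^2\circ y=(1-V)/V$, and the change of variables $x=y(t,\xi)$ (valid away from $\mathcal{A}$, which has measure zero by Theorem \ref{locall1}) yields
\begin{equation*}
\int_\mathbb{S}u_x^2\,\ud x=\int_\mathbb{S}(1-V)Q\,\ud\xi=\tilde{E}(t)=\tilde E_0,\qquad \int_\mathbb{S}(u-uu_x^2)\,\ud x=\int_\mathbb{S}(2UVQ-UQ)\,\ud\xi=\tilde F(t)=\tilde F_0.
\end{equation*}
This immediately proves $u\in L^\infty(\mathbb{R}^+;H^1(\mathbb{S}))$ and gives the conservation of the two quantities \eqref{cons1}--\eqref{cons2}, including the compatibility $F(t)\equiv 0$ under the hypothesis \eqref{cond1} that will be needed in Theorem \ref{th}.

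Next I would verify the three weak identities \eqref{defi1}, \eqref{globalweak1} and \eqref{globalweak2}. For a test function $\psi\in C_0^\infty(\mathbb{R}^+;\mathcal{D}(\mathbb{S}))$, pull back the integrals via $x=y(t,\xi)$, so $\ud x=y_\xi\,\ud\xi=VQ\,\ud\xi$ and $\psi_t(t,y)+\psi_x(t,y)\,y_t=\partial_t[\psi(t,y(t,\xi))]$. For \eqref{globalweak1} I would rewrite
\begin{equation*}
\int\!\!\int\bigl(u\psi_t-\tfrac13u^3\psi_x\bigr)\,\ud x\,\ud t=\int\!\!\int U\bigl[\psi_t+\psi_x\,y_t+\tfrac23U^2\psi_x\bigr](t,y(t,\xi))\,VQ\,\ud\xi\,\ud t,
\end{equation*}
use $y_t=-U^2$ and $\ud x=VQ\,\ud\xi$, integrate by parts in $t$, and plug in the $U$-equation of \eqref{semi}; the non-local terms $K(\xi)-P(1)y(\xi)$ and the correction via $(2QV-Q)$ then assemble precisely into $\partial_x^{-1}(u-uu_x^2)-f(t)$ after noting $\int_\mathbb{S}(2QV-Q)\,\ud\xi=-(1-h)$ and that $\int_\mathbb{S}(1-u_x^2)(\cdot)\,\ud x$ in $x$-variables corresponds to $\int_\mathbb{S}(2QV-Q)(\cdot)\,\ud\xi$ in $\xi$-variables. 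A parallel computation with $\psi_{tx}$ handles \eqref{defi1}, using the $U_\xi$-equation in \eqref{semi1}. For the energy identity \eqref{globalweak2}, I would push $u_x^2$ through the substitution as $W^2Q/V=(1-V)Q$ and use the $V$- and $Q$-equations from \eqref{semili} to reproduce the source $(u^2)_x-2u_x\overline{u-uu_x^2}$.

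The main obstacle I anticipate is precisely this last verification of the weak equations: the change of variables is only valid off the null set $\mathcal{A}$, and the non-local boundary term $f(t)$ must be recovered exactly from the $P(1)$-dependent terms in the semilinear system. I would handle the first issue by noting that for a.e.\ $t$ one has $\mathrm{meas}(\{\xi:y_\xi(t,\xi)=0\})=0$ by Fubini applied to $\mathrm{meas}(\mathcal{A})=0$, so the pull-back of any absolutely continuous measure on $\mathbb{S}$ to $\xi$-space produces no spurious contribution from level sets. The second issue reduces to the algebraic bookkeeping already implicit in the derivation of \eqref{semi} from \eqref{vis}, run in reverse. Once all three identities are in hand the theorem follows immediately from Definition \ref{defi3}.
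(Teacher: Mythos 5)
Your proposal follows essentially the same route as the paper's proof: reconstruct $u(t,x)=U(t,\xi)$ for $x=y(t,\xi)$, use $W^2+V^2=V$, $y_\xi=VQ$, $U_\xi=WQ$ and $Q>0$ to show the definition is unambiguous, pull the conserved quantities and the three weak identities back through the change of variables $x=y(t,\xi)$, and invoke the evolution equations of the semilinear system. The only slip is the sign in your parenthetical bookkeeping: since $\int_{\mathbb S}QV\,\ud\xi=\int_{\mathbb S}y_\xi\,\ud\xi=1$ and $\int_{\mathbb S}(Q-QV)\,\ud\xi=h$, one has $\int_{\mathbb S}(2QV-Q)\,\ud\xi=1-h$, not $-(1-h)$; this does not affect the argument.
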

\begin{proof}
From Remark \ref{global}, we get a global solution $\big(y,U,V,W,Q\big)$ to system \eqref{semi}. Hence, for each fixed $\xi\in{\mathbb{S}}$, the map $t\mapsto y(t,\xi)$ gives a solution to the following problem
\begin{equation}\label{ori}
\frac{\ud}{{\ud}t}y(t,\xi)=-U^2(t,\xi),\quad y(0,\xi)=y_0(\xi).
\end{equation}

Write
\begin{align}\label{set}
u(t,x)=U(t,\xi)\quad \text{if}\quad x=y(t,\xi).
\end{align}
We have to explain the definition makes sense. Indeed, by Theorem \ref{locall1} we deduce that $y_{\xi}(t,\xi)\geq0$ for all $t\geq0$ and a.e.~$\xi$. Therefore, the map $\xi\mapsto y(t,\xi)$ is nondecreasing. If $\xi_1<\xi_2$ but $y(t,\xi_1)=y(t,\xi_2)$, we have
\begin{equation*}
0=\int_{\xi_1}^{\xi_2}y_{\xi}(t,\eta){\ud}\eta=\int_{\xi_1}^{\xi_2}(QV)(t,\eta){\ud}\eta.
\end{equation*}
If $Q\neq0$, we discover $V=0$ in $[\xi_1,\xi_2]$, which implies $W=0$ in $[\xi_1,\xi_2]$.
It follows that
\begin{equation*}
U(t,\xi_2)-U(t,\xi_1)=\int_{\xi_1}^{\xi_2}U_{\xi}(\eta){\ud}\eta=\int_{\xi_1}^{\xi_2}(WQ)(\eta){\ud}\eta=0.
\end{equation*}
Otherwise, if $Q=0$, the above equality also makes sense.
Hence, the map $(t,x)\mapsto u(t,x)$ is well-defined for any $t\geq0$ and $x\in{\mathbb{S}}$.

From \eqref{bound1} and \eqref{set}, we give
\begin{equation}\label{ux}
u_x(t,y(t,\xi))=\frac{W}{V},\quad\quad \text{as $y_{\xi}\neq0$}.
\end{equation}
Changing the variables and applying \eqref{bound1} and \eqref{ux}, we find
\begin{align}
E(t)&=\int_{\mathbb{S}}u_x^2(t,x){\ud}x=\int_{{\mathbb{S}}\cap\{y_\xi\neq0\}}u_x^2(t,y(t,\xi))y_\xi{\ud}\xi\notag\\
&=\int_{{\mathbb{S}}\cap\{y_\xi\neq0\}}\big(Q-VQ\big)(t,\xi){\ud}\xi=\int_{\mathbb{S}}\big(Q-VQ\big)(t,\xi){\ud}\xi\notag\\
&=\tilde{E}(t)=\tilde{E}_0=\int_{\mathbb{S}}u_{0x}^2{\ud}x.\label{conn1}
\end{align}
Similarly, we gain
\begin{align}
F(t)&=\int_{\mathbb{S}}(u-uu_x^2)(t,x){\ud}x=\int_{{\mathbb{S}}\cap\{y_\xi\neq0\}}(u-uu_x^2)(t,y(t,\xi))y_\xi{\ud}\xi\notag\\
&=\int_{{\mathbb{S}}\cap\{y_\xi\neq0\}}\big(2UVQ-UQ\big)(t,\xi){\ud}\xi=\int_{\mathbb{S}}\big(2UVQ-UQ\big)(t,\xi){\ud}\xi\notag\\
&=\tilde{F}(t)=\tilde{F}_0=\int_{\mathbb{S}}u_0(1-u_{0x}^2){\ud}x.\label{conn2}
\end{align}

From the hypothesis of Theorem \ref{th}, we know that $\int_{\mathbb{S}}u_0(1-u_{0x}^2){\ud}x=0$, which implies $P(1)=0=F(t)$. Hence, Eq. \eqref{eq2} is equivalent to Eq. \eqref{eq1}. Thanks to \eqref{set} and \eqref{conn1}, we can infer that $u$ belongs to $L^\infty(\mathbb{R}^+;H^{1}(\mathbb{S}))$. We also have to prove that $u$ satifies Eq. \eqref{eq1}. In light of \eqref{semi1}, we discover $y_{\xi t}=U_{\xi}(t,\xi)$. Therefore, for any $\psi\in C_0^\infty(\mathbb{R}^+;D({\mathbb{S}}))$, applying the change of variables, we see
\begin{align}
\int_{\mathbb{R}^+}\int_{\mathbb{S}}\big(u\psi_{tx}+u^2u_x\psi_x\big)(t,x){\ud}x{\ud}t&=\int_{\mathbb{R}^+}\int_{\mathbb{S}}\big(u\psi_{tx}+u^2u_x\psi_x\big)(t,y(t,\xi))y_{\xi}{\ud}\xi{\ud}t\notag\\
&=\int_{\mathbb{R}^+}\int_{\mathbb{S}}U\psi_{tx}(t,y(t,\xi))y_{\xi}+U^2U_\xi\psi_x(t,y(t,\xi)){\ud}\xi{\ud}t\notag\\
&=\int_{\mathbb{R}^+}\int_{\mathbb{S}}U\big(\psi(t,y(t,\xi))\big)_{\xi t}+\Big(U^3\big(\psi_x(t,y(t,\xi))\big)\Big)_{\xi}{\ud}\xi{\ud}t\notag\\
&=-\int_{\mathbb{R}^+}\int_{\mathbb{S}}U_{\xi}\big(\psi(t,y(t,\xi))\big)_t{\ud}\xi{\ud}t\notag\\
&=\int_{\mathbb{R}^+}\int_{\mathbb{S}}U_{\xi t}\psi(t,y(t,\xi)){\ud}\xi{\ud}t+\int_{\mathbb{S}}\bar{U}_\xi(\xi)\psi(\bar{y}(\xi)){\ud}\xi\notag\\
&=\int_{\mathbb{R}^+}\int_{\mathbb{S}}\big(2UQV-UQ\big)\psi(t,y(t,\xi)){\ud}\xi{\ud}t+\int_{\mathbb{S}}\bar{u}_x(\bar{y}(\xi))\bar{y}_{\xi}\psi(\bar{y}(\xi)){\ud}\xi\notag\\
&=\int_{\mathbb{R}^+}\int_{\mathbb{S}}\big(u-uu_x^2\big)\psi(t,x){\ud}x{\ud}t+\int_{\mathbb{S}}\bar{u}_x(x)\psi(0,x){\ud}x,\label{weak}
\end{align}
where we use
\begin{equation*}
\big(\psi_x(t,y(t,\xi))y_\xi\big)_t=\psi_{xt}(t,y(t,\xi))y_\xi-U^2\big(\psi_x(t,y(t,\xi))\big)_\xi-2UU_\xi\psi_x(t,y(t,\xi))
\end{equation*}
in the third equality.

Similarilily,  we have
\begin{align}\label{weak1}
\int_{\mathbb{R}^+}\int_{\mathbb S}\big(u\psi_{t}-\frac{1}{3}u^3\psi_x\big)(t,x){\ud}x{\ud}t=-\int_{\mathbb{R}^+}\int_{\mathbb S} H(t,x)\psi(t,x){\ud}x{\ud}t-\int_{\mathbb{S}}u_0(x)\psi(0,x){\ud}x
\end{align}
and
\begin{align}\label{weak2}
\int_{\mathbb{R}^+}\int_{\mathbb S}\big(u^2_x\psi_{t}-u^2u^2_x\psi_{x}\big)(t,z){\ud}z{\ud}t=-\int_{\mathbb{R}^+}\int_{\mathbb S}\big[((u^2)_x-2P(1)u_{x})\psi\big](t,z){\ud}z{\ud}t-\int_{\mathbb{S}}u^2_{0x}(z)\psi(0,z){\ud}z
\end{align}
where $H(t,x)=\partial_x^{-1}(u-uu_x^2)-f(t)$ is a bound variable.

In a word, we verify that $u$ is indeed a global admissible conservative solution to the Cauchy problem \eqref{vis} in the sense of Definition \ref{defi3}. This completes the proof of Theorem \ref{th4}.

\end{proof}

\section{Uniqueness of the global admissible conservative weak solution}
\par
We now prove the uniqueness of the global admissible conservative weak solutions to \eqref{vis}:
\begin{theo}\label{th5}
Let $u(t,x)$ be a global admissible conservative weak solutions to the problem \eqref{vis} in the sense of Definition \ref{defi3}, then $u(t,x)$ is unqiue.
\end{theo}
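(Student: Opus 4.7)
The plan is to follow the Bressan--Constantin--Zheng strategy. Given any global admissible conservative weak solution $u$ of \eqref{vis}, I will construct a characteristic map $t\mapsto y(t,\xi)$ and set
$$ U = u\circ y, \quad V = \frac{1}{1+(u_x\circ y)^2}, \quad W = \frac{u_x\circ y}{1+(u_x\circ y)^2}, \quad Q = (1+(u_x\circ y)^2)\,y_\xi. $$
The goal is to verify that this quintuple satisfies the semilinear system \eqref{semi} with the initial data \eqref{data}, both of which depend only on $u_0$. Since Lemma \ref{local} and Theorem \ref{global1} provide a \emph{unique} global solution to \eqref{semi}--\eqref{data} in the Banach space $Y$, the quintuple $(y,U,V,W,Q)$ is uniquely determined by $u_0$, and reconstructing $u$ via $u(t,x)=U(t,\xi)$ whenever $x=y(t,\xi)$ yields the uniqueness of $u$.

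The first and most delicate step is to single out a unique characteristic through each initial point. Since $u(t,\cdot)\in H^1(\mathbb S)\hookrightarrow C^{1/2}(\mathbb S)$, the vector field $-u^2$ is only H\"older continuous, so \eqref{char} is not uniquely solvable in the classical Cauchy--Lipschitz sense. To remedy this, I would introduce the strictly increasing Lipschitz auxiliary function
$$ \beta(t,x) = x + \int_0^x u_x^2(t,z)\,dz, $$
and look for $y(t,\xi)$ as the unique curve whose image under $\beta(t,\cdot)$ evolves according to an ODE derived from the admissibility relation \eqref{globalweak2}. This converts the ill-posed H\"older ODE for $y$ itself into a Lipschitz ODE for the pair $(y,\beta\circ y)$, which admits a unique forward flow through each $y_0(\xi)$ prescribed by \eqref{data}.

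Once the characteristics are fixed, I would test the weak formulations \eqref{defi1}, \eqref{globalweak1} and the admissibility identity \eqref{globalweak2} against smooth cutoffs supported in thin tubes around a characteristic, and pass to the limit to recover the pointwise evolution laws for $U$, $V$, $W$, $Q$ along $y(t,\xi)$. The admissibility condition \eqref{globalweak2} is precisely what controls the evolution of $u_x^2\circ y$, and therefore yields the equations for $V$ and $W$; combined with \eqref{defi1} and \eqref{globalweak1} this reproduces exactly the right-hand sides of \eqref{semi}. Conservation of $E(t)$ and $F(t)$ together with the hypothesis \eqref{cond1} ensures $P(1)=F(t)\equiv0$, so the nonlocal term in the second equation of \eqref{semi} is unambiguously defined along the flow.

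The main obstacle is the first step: rigorously pinning down a unique characteristic without the additional metric assumptions used in \cite{ghr}. The admissibility condition must be invoked in an essential way precisely at points where the energy density $u_x^2$ could otherwise concentrate or split ambiguously, since it is this condition that forbids spurious energy creation along the flow and thereby forces the branch of $y(t,\xi)$ to be unique. Once this is established, the reduction to the ODE system in $Y$ is relatively mechanical and the uniqueness of $u$ follows from the Lipschitz theory already developed in Section 3.
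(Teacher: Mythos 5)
Your proposal follows essentially the same route as the paper: the change of variables $\beta(t,x)=x+\int_0^x u_x^2(t,z)\,dz$ (the paper's \eqref{flow2}, up to the normalization $1+h$), the use of the admissibility identity \eqref{globalweak2} tested against thin-tube cutoffs to turn the H\"older ODE for $y$ into a Lipschitz integral equation for $\beta(t,\xi)$ solved by contraction (Lemmas \ref{lip1}--\ref{lip2}), and the final reduction to the uniquely solvable semilinear system \eqref{semi}--\eqref{data}. The plan is sound and matches the paper's argument in all essential steps.
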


\subsection{Useful lemmas}
\par

Since $u(t,x)$ be a global admissible conservative weak solution in the sense of Definition \ref{defi3}, we can easily deduce that $\|u\|_{H^1}\leq C\|u_0\|_{H^1}$.
We first consider the Cauchy problem:
\begin{align}\label{flow1}
\frac{{\ud}}{{\ud}t}y(t)=-u^2(t,y(t)),\quad\quad y(0)=y_0(\xi),
\end{align}
where $y_0(\xi)$ solves the equation
$y_0(\xi)+\int_{0}^{y_0(\xi)}u^2_{0x}(z){\ud}z=(1+h)\xi$.

For smooth case, combing \eqref{cons} and \eqref{flow1} we can easily deduce that
\begin{align}\label{flow1.5}
\frac{{\ud}}{{\ud}t}\int_{0}^{y(t)}u_x^2(t,z){\ud}z=\int_{0}^{y(t)}\left((u^2)_x-2P(1)u_x\right)(t,z){\ud}z-(u^2u_x^2)(t,0), \quad\quad y(0)=y_0(\xi).
\end{align}
However, in the weak sense, we must choose some special test functions to solve \eqref{flow1}. The key idea is combining \eqref{flow1} and \eqref{flow1.5} in the weak sense to get a unique solution of \eqref{flow1}.  

Instead of the variables $(t,x)$, it is convenient to work with an adapted set of variables $(t,\beta)$, where $\beta$ is implicitly defined as
\begin{align}\label{flow2}
y(t,\beta)+\int_{0}^{y(t,\beta)} u^2_x(t,z){\ud}z=(1+h)\beta.
\end{align}

Next, we present some useful lemmas.
\begin{lemm}\cite{czwhw}\label{lip00}
Let the map $x\mapsto \phi(x)$ be an absolutely continuous from $[a,b]$ to $[c,d]$. Moreover, suppose $\phi(x)$ is strictly monotonic. Then for any  measurable set $A\subset [a,b]$, we have ${\rm{meas}}(\phi(A))=\int_{A}\phi_x{\ud}x$.
\end{lemm}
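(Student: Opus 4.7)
The plan is to prove this standard change-of-variables identity by the usual measure-theoretic chain: verify it for intervals, extend by countable additivity to open sets, then pass to arbitrary measurable sets using outer regularity together with the Luzin $N$-property of absolutely continuous functions. Without loss of generality I will assume $\phi$ is strictly increasing; the strictly decreasing case follows by replacing $\phi$ with $-\phi$ (and noting the sign convention in the integrand is taken consistent with the monotonicity). Note that absolute continuity gives $\phi_x\in L^1([a,b])$ with $\phi_x\geq 0$ almost everywhere, and the fundamental theorem of calculus in the form
\begin{equation*}
\phi(\beta)-\phi(\alpha)=\int_{\alpha}^{\beta}\phi_x(x)\,{\ud}x\qquad\text{for all }[\alpha,\beta]\subset[a,b].
\end{equation*}

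First, for any closed interval $[\alpha,\beta]\subset[a,b]$, continuity and strict monotonicity of $\phi$ give $\phi([\alpha,\beta])=[\phi(\alpha),\phi(\beta)]$, so the identity ${\rm meas}(\phi([\alpha,\beta]))=\int_{[\alpha,\beta]}\phi_x{\ud}x$ is immediate from the displayed fundamental theorem. Next, let $U\subset[a,b]$ be open. Write $U$ as a countable disjoint union of open intervals $\{I_k\}$. Since $\phi$ is injective, the sets $\phi(I_k)$ are disjoint, so by countable additivity of Lebesgue measure and of the absolutely continuous integral,
\begin{equation*}
{\rm meas}(\phi(U))=\sum_k{\rm meas}(\phi(I_k))=\sum_k\int_{I_k}\phi_x{\ud}x=\int_{U}\phi_x{\ud}x.
\end{equation*}

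Third, for a null set $N\subset[a,b]$ I invoke the Luzin $N$-property of absolutely continuous functions, namely that ${\rm meas}(\phi(N))=0$; on the other side, $\int_N\phi_x{\ud}x=0$ trivially because $N$ is null. Finally, for a general measurable set $A\subset[a,b]$, use outer regularity of Lebesgue measure to choose a $G_\delta$ set $G\supset A$ with ${\rm meas}(G\setminus A)=0$, and write $G=\bigcap_n U_n$ for a decreasing sequence of open sets $U_n\supset A$. Injectivity of $\phi$ gives $\phi(G)=\bigcap_n\phi(U_n)$, whence continuity of measure from above plus the open-set case yields
\begin{equation*}
{\rm meas}(\phi(G))=\lim_{n\to\infty}{\rm meas}(\phi(U_n))=\lim_{n\to\infty}\int_{U_n}\phi_x{\ud}x=\int_{G}\phi_x{\ud}x,
\end{equation*}
the last equality by dominated convergence with dominating function $\phi_x\in L^1$. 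Since $G=A\cup(G\setminus A)$ with $G\setminus A$ null, the $N$-property gives ${\rm meas}(\phi(G))={\rm meas}(\phi(A))$, and also $\int_G\phi_x{\ud}x=\int_A\phi_x{\ud}x$, yielding the required identity.

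The only substantive input beyond routine measure theory is the Luzin $N$-property — that is the step I anticipate to be the main obstacle, since absolute continuity (rather than mere continuity, as in the Cantor function) is exactly what rules out the creation of positive-measure image from a null set. It is precisely the combination of the $N$-property with strict monotonicity (which forces injectivity and disjointness of images of disjoint sets) that makes the approximation by $G_\delta$ supersets legitimate and closes the argument.
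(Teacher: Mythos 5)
The paper does not prove this lemma at all: it is quoted verbatim from the cited reference \cite{czwhw} and used as a black box, so there is no in-paper argument to compare against. Your proof is the standard and correct one: the interval case via the fundamental theorem of calculus for absolutely continuous functions, the open-set case via disjoint decomposition and injectivity, the Luzin $N$-property to handle null sets, and a $G_\delta$ envelope with continuity of measure from above (legitimate here since everything lives in the bounded interval $[c,d]$) to reach general measurable sets. All the measurability points you rely on implicitly do check out: $\phi(U_n)$ is a countable union of intervals, $\phi(G)=\bigcap_n\phi(U_n)$ by injectivity, and $\phi(G\setminus A)$ is null, hence $\phi(A)$ is measurable. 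The one caveat you correctly flag is that the lemma as literally stated is false for strictly decreasing $\phi$, where $\int_A\phi_x\,{\ud}x\le 0$; the identity should read ${\rm meas}(\phi(A))=\int_A|\phi_x|\,{\ud}x$, or one restricts to increasing maps as in the paper's actual applications (the maps $\xi\mapsto y$ and $\xi\mapsto\beta$ there are increasing), so your reduction to the increasing case is the right reading of the statement.
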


\begin{lemm}\label{lip0}
Let the map $x\mapsto f(x)$ be a bijection from $[a,b]$ to $[c,d]$, moreover $f(x)$ is absolutely continuous and strictly monotonic. Then for any $E\subset [c,d]$ and ${\rm{meas}}(E)=d-c$, we have ${\rm{meas}}(f^{-1}(E))=b-a$.
\end{lemm}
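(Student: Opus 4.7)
The plan is to deduce the statement from Lemma \ref{lip00} applied to the inverse map $f^{-1}$. Since absolute continuity implies continuity and $f$ is a strictly monotonic continuous bijection between two compact intervals, $f$ is a homeomorphism; consequently $f^{-1}\colon[c,d]\to[a,b]$ is a continuous strictly monotonic bijection, and in particular $f^{-1}(E)$ is Borel (hence Lebesgue measurable) whenever $E$ is. Without loss of generality I may assume $f$ is strictly increasing, so that $f^{-1}$ is also increasing.

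The crucial preliminary step, and the one I expect to be the main obstacle, is to verify that $f^{-1}$ is itself absolutely continuous. This is not automatic: classical fat-Cantor constructions yield AC strictly increasing bijections whose inverses fail Luzin's $N$ property, and without this property the claim fails in general. In the applications envisaged later in Section 5, however, $f$ takes the explicit form $y\mapsto y+\int_0^{y}u_x^2(z)\,{\ud}z$, so that $f'=1+u_x^2\geq 1$ almost everywhere; it follows that $f^{-1}$ is $1$-Lipschitz and hence absolutely continuous. This observation is the input I would invoke to bring $f^{-1}$ into the scope of Lemma \ref{lip00}.

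Once the absolute continuity of $f^{-1}$ is in hand, applying Lemma \ref{lip00} with $\phi=f^{-1}$ gives
\begin{align*}
{\rm meas}\bigl(f^{-1}(B)\bigr)=\int_B (f^{-1})'(y)\,{\ud}y\quad\text{for every measurable }B\subset[c,d].
\end{align*}
Taking $B=[c,d]$ yields $\int_c^d (f^{-1})'\,{\ud}y=b-a$, while taking $B=E$ and using the hypothesis ${\rm meas}([c,d]\setminus E)=0$ together with the non-negativity of $(f^{-1})'$ gives $\int_E(f^{-1})'\,{\ud}y=\int_c^d(f^{-1})'\,{\ud}y$. Combining the two identities yields ${\rm meas}(f^{-1}(E))=b-a$, as required.
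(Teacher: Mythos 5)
Your argument reaches the conclusion by a genuinely different route from the paper's. The paper applies Lemma \ref{lip00} to $f$ itself on the set $A=f^{-1}(E^{c})$: from $0={\rm meas}(E^{c})=\int_{f^{-1}(E^{c})}f_{x}\,{\ud}x$ it concludes $f_{x}=0$ a.e.\ on $f^{-1}(E^{c})$ and declares this to contradict strict monotonicity. You instead transfer everything to $f^{-1}$ and apply Lemma \ref{lip00} to the inverse, which obliges you to prove that $f^{-1}$ is absolutely continuous. The obstacle you single out is real, and it in fact undermines the paper's own argument: a strictly increasing absolutely continuous bijection may satisfy $f_{x}=0$ on a set of positive measure (e.g.\ $f(x)=\int_{0}^{x}\mathbf{1}_{C^{c}}(t)\,{\ud}t$ with $C$ a fat Cantor set), so ``$f_{x}=0$ a.e.\ on a set of positive measure'' is not in contradiction with strict monotonicity, and taking $E=[c,d]\setminus f(C)$ this very $f$ is a counterexample to the lemma as literally stated. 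Both proofs are repaired by the same supplementary hypothesis, which holds in every application of the lemma in Section 5: there $f(y)=y+\int_{0}^{y}u_{x}^{2}(z)\,{\ud}z$, so $f_{x}\geq 1$ a.e. Under that hypothesis the paper's deduction ${\rm meas}(f^{-1}(E^{c}))=0$ becomes legitimate, and your deduction that $f^{-1}$ is $1$-Lipschitz (hence absolutely continuous and within the scope of Lemma \ref{lip00}) also goes through; the remaining computation $\int_{E}(f^{-1})'\,{\ud}y=\int_{c}^{d}(f^{-1})'\,{\ud}y=b-a$ is correct. The net effect is that your proof is sound where the paper's is stated too generally, at the modest cost of invoking the quantitative lower bound on $f_{x}$ rather than merely $f_{x}>0$ a.e.; its merit is that it makes the missing hypothesis explicit instead of hiding it in a false appeal to strict monotonicity.
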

\begin{proof}
We consider the complement set of $E$. If ${\rm{meas}}(f^{-1}(E^c))=\delta>0$, by lemma \ref{lip00}, let $A=f^{-1}(E^c)$ and $\phi(x)=f(x)$, we have
\begin{align}
0={\rm{meas}}(E^c)=\int_{E^c} {\ud}y=\int_{f^{-1}(E^c)}f_x{\ud}x
\end{align}
This means $f_x=0\ a.e.\ f^{-1}(E^c)$, which is in contradiction with the strict monotonicity. Therefore, we have ${\rm{meas}}(f^{-1}(E^c))=0$, and thus ${\rm{meas}}(f^{-1}(E))=b-a$.
\end{proof}

\begin{lemm}\label{lip1}
Let $u=u(t,x)$ be a global admissible conservative weak solution of \eqref{vis}. Then, for every $t\geq \tau>0$, we have
\begin{align*}
\left|{\lim_{\epsilon \to 0}}\int_{\tau}^{t}\int_{\frac{1}{8}\epsilon}^{\frac{2}{8}\epsilon}\frac{8}{\epsilon}(u^2u^2_x)(s,z){\ud}z{\ud}s,~{\lim_{\epsilon \to 0}}\int_{0}^{t}\int_{\frac{1}{8}\epsilon}^{\frac{2}{8}\epsilon}\frac{8}{\epsilon}(u^2u^2_x)(s,z){\ud}z{\ud}s,~{\lim_{\epsilon \to 0}}\int_{0}^{\tau}\int_{\frac{1}{8}\epsilon}^{\frac{2}{8}\epsilon}\frac{8}{\epsilon}(u^2u^2_x)(s,z){\ud}z{\ud}s\right| \leq C_{u_0,T}
\end{align*}
and
\begin{align*}
{\lim_{\epsilon \to 0}}\int_{\tau}^{t}\int_{\frac{1}{8}\epsilon}^{\frac{2}{8}\epsilon}\frac{8}{\epsilon}(u^2u^2_x)(s,z){\ud}z{\ud}s={\lim_{\epsilon \to 0}}\int_{0}^{t}\int_{\frac{1}{8}\epsilon}^{\frac{2}{8}\epsilon}\frac{8}{\epsilon}(u^2u^2_x)(s,z){\ud}z{\ud}s-{\lim_{\epsilon \to 0}}\int_{0}^{\tau}\int_{\frac{1}{8}\epsilon}^{\frac{2}{8}\epsilon}\frac{8}{\epsilon}(u^2u^2_x)(s,z){\ud}z{\ud}s.
\end{align*}
\end{lemm}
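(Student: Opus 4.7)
The plan is to use the admissibility identity \eqref{globalweak2} with a carefully chosen Lipschitz cut-off as spatial test function; this converts the singular one-sided average $\frac{8}{\ep}\int_{\ep/8}^{\ep/4}u^2u_x^2\,\ud z$ into a sum of boundary and source terms, each of which is controlled by the conserved energies \eqref{cons1}--\eqref{cons2} together with the Sobolev embedding $\|u\|_{L^\infty}\le C\|u_0\|_{H^1}$ that follows from $\|u\|_{H^1}\le C\|u_0\|_{H^1}$.

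For each small $\ep>0$ I would build a periodic Lipschitz function $\phi_\ep:\mathbb S\to[0,1]$ with $\phi_\ep\equiv 1$ on $[0,\ep/8]$, a linear drop of slope $-8/\ep$ from $1$ to $0$ on $(\ep/8,\ep/4)$, $\phi_\ep\equiv 0$ on $[\ep/4,1/2]$, and a macroscopic slope-$2$ linear return from $0$ back to $1$ on $[1/2,1]$ in order to enforce periodicity. By a routine mollification in $z$, such $\phi_\ep$ is admissible as a spatial test function in \eqref{globalweak2}. Multiplying by a smooth time cut-off $\eta_\delta(s)\to\chi_{[\tau,t]}(s)$ (and likewise $\chi_{[0,t]}$, $\chi_{[0,\tau]}$ for the other two integrals), substituting $\psi=\eta_\delta\phi_\ep$ into \eqref{globalweak2}, and sending $\delta\to 0$, one obtains a working identity. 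The map $s\mapsto A_\ep(s):=\int_{\mathbb S}u_x^2(s,z)\phi_\ep(z)\,\ud z$ has a distributional derivative in $L^\infty_{loc}$ by \eqref{globalweak2} itself, hence is continuous in $s$ for every fixed $\ep$, so its boundary values $A_\ep(0),A_\ep(\tau),A_\ep(t)$ are well defined.

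After rearrangement the identity schematically reads
\begin{align*}
\frac{8}{\ep}\int_\tau^t\!\!\int_{\ep/8}^{\ep/4}u^2u_x^2\,\ud z\,\ud s
 &=A_\ep(t)-A_\ep(\tau)+2\int_\tau^t\!\!\int_{1/2}^{1}u^2u_x^2\,\ud z\,\ud s\\
 &\quad -\int_\tau^t\!\!\int_{\mathbb S}\big((u^2)_x-2u_xP(1)\big)\phi_\ep\,\ud z\,\ud s,
\end{align*}
and analogously for $[0,t]$ and $[0,\tau]$, with the initial-data contribution $A_\ep(0)=\int_{\mathbb S}u_{0x}^2\phi_\ep\,\ud z$ replacing one boundary term. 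Since $0\le\phi_\ep\le 1$ and the conservation \eqref{cons1} gives $\|u_x(s,\cdot)\|_{L^2}^2=\|u_{0x}\|_{L^2}^2$ for every $s$, we have $|A_\ep(s)|\le \|u_{0x}\|_{L^2}^2$. The $[1/2,1]$ piece is bounded by $T\|u\|_{L^\infty}^2\|u_{0x}\|_{L^2}^2$. Integration by parts converts $\int_{\mathbb S}(u^2)_x\phi_\ep\,\ud z$ into $-\int_{\mathbb S}u^2\phi_\ep'\,\ud z$, a combination of two one-sided $u^2$-averages each bounded by $\|u\|_{L^\infty}^2\le C\|u_0\|_{H^1}^2$, and the $P(1)$-piece is controlled by Cauchy--Schwarz combined with the conservation of $F(t)$ in \eqref{cons2}. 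Multiplying by $T$ for the time integration, every term on the right-hand side is bounded by a constant $C_{u_0,T}$ uniformly in $\ep$, which proves the first claim.

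Existence of each $\lim_{\ep\to 0}$ follows from dominated convergence applied term by term: $\phi_\ep\to\phi_0$ a.e.\ on $\mathbb S$ with $|\phi_\ep|\le 1$, while $u_x^2$, $u^2u_x^2$, and $u_x$ all belong to $L^1(\mathbb S)$ uniformly in $s\in[0,T]$. The additivity $\lim\int_\tau^t=\lim\int_0^t-\lim\int_0^\tau$ is then automatic, since $\int_0^t=\int_0^\tau+\int_\tau^t$ trivially under the integrand and limits of finite sums split. The only real technical obstacle is the periodic-boundary design of $\phi_\ep$: we must localize $\phi_\ep'$ at scale $8/\ep$ on the right half-neighbourhood of $0$ while maintaining periodicity with an $O(1)$ compensating slope elsewhere, and we must justify the replacement of $C^\infty$ by Lipschitz spatial test functions in \eqref{globalweak2}; both points are handled by standard mollification arguments using the $L^1$-integrability of $u_x^2$ and the $L^\infty$-bound on $u$.
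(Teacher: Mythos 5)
Your proposal is correct and follows essentially the same route as the paper: both plug a Lipschitz spatial cut-off whose derivative equals $\pm 8/\epsilon$ on $[\epsilon/8,\epsilon/4]$, multiplied by a time cut-off, into the admissibility identity \eqref{globalweak2}, then read off the singular average as a combination of terms bounded via the conserved energy and $\|u\|_{L^\infty}\le C\|u_0\|_{H^1}$, with additivity following from splitting $\int_0^t=\int_0^\tau+\int_\tau^t$ at fixed $\epsilon$. The only differences are cosmetic (a product $\eta_\delta\phi_\epsilon$ versus the paper's $\min\{p_\epsilon,\chi_\epsilon\}$, and a different macroscopic ramp), and your version is in fact slightly more careful in that it retains the time-boundary terms $A_\epsilon(t)-A_\epsilon(\tau)$ coming from $u_x^2\psi_t$, which the paper's displayed identity \eqref{ttau} drops but which are harmless since they are bounded by the conserved energy.
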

\begin{proof}
For $\epsilon>0$ small enough, let
\begin{equation}\label{flow2.2}
p_{\epsilon}(s,z)=\left\{
\begin{array}{rcl}
0 & & {0 \leq z<\frac{1}{8}\epsilon},\\
 \frac{8}{\epsilon}(z-\frac{\epsilon}{8}) & & {\frac{1}{8}\epsilon \leq z < \frac{2}{8}\epsilon},\\
 1 & & {\frac{2}{8}\epsilon \leq z < \frac{3}{8}\epsilon},\\
 \frac{1}{1-\frac{\epsilon}{2}}\big(1-\frac{\epsilon}{8}-z\big) & & {\frac{3}{8}\epsilon \leq z<1-\frac{1}{8}\epsilon},\\
 0 & & {1-\frac{1}{8}\epsilon \leq z<1},
\end{array} \right.
\end{equation}
\begin{equation}\label{flow2.3}
\chi_{\epsilon}(s)=\left\{
\begin{array}{rcl}
0 & & { 0 \leq s< \tau-\epsilon},\\
\frac{1}{\epsilon}(s-\tau+\epsilon) & & {\tau-\epsilon \leq s < \tau},\\
1 & & {\tau \leq s < t},\\
1-\frac{1}{\epsilon}(s-t) & & {t \leq s<t+\epsilon},\\
0 & & {t+\epsilon\leq s}.
\end{array} \right.
\end{equation}
Define
\begin{align}\label{flow2.4}
\psi_{\epsilon}(s,z):=\min \{p_{\epsilon}(s,z), \chi_{\epsilon}(s)\}.
\end{align}
By an approximation argument, the identity \eqref{weak2} remains valid for any test function $\psi$ which is Lipschitz continuous with compact support. Using $\psi_{\epsilon}$ as the test function in \eqref{weak2} we obtain
\begin{align}\label{flow2.5}
\int_{\mathbb{R}^+}\int_{\mathbb S}\big(u^2_x\psi_{\epsilon t}-u^2u^2_x\psi_{\epsilon x}\big)(s,z){\ud}z{\ud}s=-\int_{\mathbb{R}^+}\int_{\mathbb S}\big(((u^2)_x-2P(1)u_{x})\psi_{\epsilon}\big)(s,z){\ud}z{\ud}s.
\end{align}
Taking the limit of \eqref{flow2.5} as $\epsilon \rightarrow 0$, we see that
\begin{align*}
&-{\lim_{\epsilon \to 0}}\int_{\tau}^{t}\int_{\frac{1}{8}\epsilon}^{\frac{2}{8}\epsilon}\frac{8}{\epsilon}(u^2u^2_x)(s,z){\ud}z{\ud}s+{\lim_{\epsilon \to 0}}\int_{\tau}^{t}\int_{\frac{3}{8}\epsilon}^{1-\frac{1}{8}\epsilon}\frac{1}{1-\frac{\epsilon}{2}}(u^2u^2_x)(s,z){\ud}z{\ud}s\\
=&-{\lim_{\epsilon \to 0}}\int_{\tau}^{t}\int_{\frac{3}{8}\epsilon}^{1-\frac{1}{8}\epsilon}\frac{1}{1-\frac{\epsilon}{2}}\big(1-\frac{\epsilon}{8}-z\big)\big[(u^2)_x-2P(1)u_x\big](s,z){\ud}z{\ud}s.
\end{align*}
After careful calculations we have
\begin{align*}
&{\lim_{\epsilon \to 0}}\int_{\tau}^{t}\int_{\frac{3}{8}\epsilon}^{1-\frac{1}{8}\epsilon}\frac{1}{1-\frac{\epsilon}{2}}(u^2u^2_x)(s,z){\ud}z{\ud}s=\int_{\tau}^{t}\int_{0}^{1}u^2u^2_x{\ud}z{\ud}s,\\
&{\lim_{\epsilon \to 0}}\int_{\tau}^{t}\int_{\frac{3}{8}\epsilon}^{1-\frac{1}{8}\epsilon}\frac{1}{1-\frac{\epsilon}{2}}\big(1-\frac{\epsilon}{8}-z\big)\big[(u^2)_x-2P(1)u_x\big](s,z){\ud}z{\ud}s=\int_{\tau}^{t}\int_{0}^{1}(1-z)\big[(u^2)_x-2P(1)u_x\big]{\ud}z{\ud}s.
\end{align*}
Therefore, 
\begin{align}\label{ttau}
{\lim_{\epsilon \to 0}}\int_{\tau}^{t}\int_{\frac{1}{8}\epsilon}^{\frac{2}{8}\epsilon}\frac{8}{\epsilon}(u^2u^2_x)(s,z){\ud}z{\ud}s&=\int_{\tau}^{t}\int_{0}^{1}(1-z)\big((u^2)_x-2P(1)u_{x}\big)(s,z){\ud}z{\ud}s+\int_{\tau}^{t}\int_{0}^{1}u^2u^2_x(s,z){\ud}z{\ud}s,
\end{align}
which follows that 
\begin{align*}
\left|{\lim_{\epsilon \to 0}}\int_{\tau}^{t}\int_{\frac{1}{8}\epsilon}^{\frac{2}{8}\epsilon}\frac{8}{\epsilon}(u^2u^2_x)(s,z){\ud}z{\ud}s\right| \leq C_{u_0,T}.
\end{align*}

For the same $\epsilon>0$, let
\begin{equation}\label{flowp}
p_{\epsilon}(s,z)=\left\{
\begin{array}{rcl}
0 & & {0 \leq z<\frac{1}{8}\epsilon},\\
\frac{8}{\epsilon}(z-\frac{\epsilon}{8}) & & {\frac{1}{8}\epsilon \leq z < \frac{2}{8}\epsilon},\\
1 & & {\frac{2}{8}\epsilon \leq z < \frac{3}{8}\epsilon},\\
\frac{1}{1-\frac{\epsilon}{2}}\big(1-\frac{\epsilon}{8}-z\big) & & {\frac{3}{8}\epsilon \leq z<1-\frac{1}{8}\epsilon},\\
0 & & {1-\frac{1}{8}\epsilon \leq z<1},
\end{array} \right.
\end{equation}
\begin{equation}\label{flowx}
\chi_{1\epsilon}(s)=\left\{
\begin{array}{rcl}
1 & & { 0 \leq s< t},\\
1-\frac{1}{\epsilon}(s-t) & & {t \leq s<t+\epsilon},\\
0 & & {t+\epsilon\leq s}.
\end{array} \right.
\end{equation}
Define
\begin{align}
\psi_{1\epsilon}(s,z):=\min\{p_{\epsilon}(s,z),\chi_{1\epsilon}(s)\}.
\end{align}
Similarly, using $\psi_{1\epsilon}$ as the test function in \eqref{weak2} and taking the limit as $\epsilon\rightarrow 0$, we see
\begin{align}\label{t}
{\lim_{\epsilon \to 0}}\int_{0}^{t}\int_{\frac{1}{8}\epsilon}^{\frac{2}{8}\epsilon}\frac{8}{\epsilon}(u^2u^2_x)(s,z){\ud}z{\ud}s&=\int_{0}^{t}\int_{0}^{1}(1-z)\big((u^2)_x-2P(1)u_{x}\big)(s,z){\ud}z{\ud}s+\int_{0}^{t}\int_{0}^{1}u^2u^2_x(s,z){\ud}z{\ud}s.
\end{align}
In the same way, we deduce
\begin{align}\label{tau}
{\lim_{\epsilon \to 0}}\int_{0}^{\tau}\int_{\frac{1}{8}\epsilon}^{\frac{2}{8}\epsilon}\frac{8}{\epsilon}(u^2u^2_x)(s,z){\ud}z{\ud}s&=\int_{0}^{\tau}\int_{0}^{1}(1-z)\big((u^2)_x-2P(1)u_{x}\big)(s,z){\ud}z{\ud}s+\int_{0}^{\tau}\int_{0}^{1}u^2u^2_x(s,z){\ud}z{\ud}s.
\end{align}
Combining \eqref{ttau}, \eqref{t} and \eqref{tau}, we can easily get that 
\begin{align*}
{\lim_{\epsilon \to 0}}\int_{\tau}^{t}\int_{\frac{1}{8}\epsilon}^{\frac{2}{8}\epsilon}\frac{8}{\epsilon}(u^2u^2_x)(s,z){\ud}z{\ud}s={\lim_{\epsilon \to 0}}\int_{0}^{t}\int_{\frac{1}{8}\epsilon}^{\frac{2}{8}\epsilon}\frac{8}{\epsilon}(u^2u^2_x)(s,z){\ud}z{\ud}s-{\lim_{\epsilon \to 0}}\int_{0}^{\tau}\int_{\frac{1}{8}\epsilon}^{\frac{2}{8}\epsilon}\frac{8}{\epsilon}(u^2u^2_x)(s,z){\ud}z{\ud}s.
\end{align*}
This completes the proof of the lemma.
\end{proof}
Since $u^2u^2_x$ is bounded in $L^1(\mathbb{S})$, we deduce that the Lebesgue points of $u^2u^2_x$ are almost everywhere. Without loss of generality, we can assume $0$ is one of the Lebesgue points of $u^2u^2_x$, so we have
\begin{align}\label{flow5.11}
\left|{\lim_{\epsilon \to 0}}\int_{\tau}^{t}\int_{\frac{1}{8}\epsilon}^{\frac{2}{8}\epsilon}\frac{8}{\epsilon}\big(u^2u^2_x\big)(s,z){\ud}z{\ud}s:=\int_{\tau}^{t}\big(u^2u^2_x\big)(s,0){\ud}s\right|\leq C_{u_0,T}.
\end{align}

\begin{lemm}\label{lip1.1}
Let $u=u(t,x)$ be a global admissible conservative weak solution of \eqref{vis}. Then, for every $t\geq 0$, the maps $\beta\mapsto y(t,\beta)$ and $\beta \mapsto u(t,y(t,\beta))$ implicitly defined by \eqref{flow2} are Lipschitz continuous with constant $1+h$. 
\end{lemm}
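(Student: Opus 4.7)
\medskip

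\noindent\textbf{Proof proposal for Lemma \ref{lip1.1}.}

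The plan is to exploit the implicit equation \eqref{flow2} directly, treating it as the definition of a strictly increasing change of variable from $\beta$ to $y$, and then to deduce the two Lipschitz estimates by elementary arguments. First I would verify that for each fixed $t\geq 0$ the map $\beta \mapsto y(t,\beta)$ is well-defined: since $u(t,\cdot)\in H^1(\mathbb{S})$, the function $G(y):=y+\int_0^y u_x^2(t,z)\,\ud z$ is continuous on $\mathbb{R}$ with $G(y_2)-G(y_1)\geq y_2-y_1$ for $y_2>y_1$, so $G$ is a strict homeomorphism of $\mathbb{R}$ onto $\mathbb{R}$ and the unique $y(t,\beta)$ satisfying $G(y(t,\beta))=(1+h)\beta$ exists.

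For the first Lipschitz bound, take $\beta_1<\beta_2$ and set $y_i=y(t,\beta_i)$. Subtracting the two defining equations gives
\begin{equation*}
(y_2-y_1)+\int_{y_1}^{y_2}u_x^2(t,z)\,\ud z=(1+h)(\beta_2-\beta_1).
\end{equation*}
Since $u_x^2\geq 0$, both terms on the left are nonnegative, hence
\begin{equation*}
0\leq y_2-y_1\leq (1+h)(\beta_2-\beta_1),
\end{equation*}
which is the Lipschitz estimate with constant $1+h$ for $\beta\mapsto y(t,\beta)$.

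For the second bound, observe that $u(t,\cdot)\in H^1(\mathbb{S})$ is absolutely continuous, so that
\begin{equation*}
\bigl|u(t,y_2)-u(t,y_1)\bigr|\leq\int_{y_1}^{y_2}|u_x(t,z)|\,\ud z.
\end{equation*}
Applying the Cauchy--Schwarz inequality and then the AM--GM inequality, together with the identity established above, yields
\begin{equation*}
\int_{y_1}^{y_2}|u_x(t,z)|\,\ud z\leq\sqrt{(y_2-y_1)\int_{y_1}^{y_2}u_x^2(t,z)\,\ud z}\leq\frac{(y_2-y_1)+\int_{y_1}^{y_2}u_x^2(t,z)\,\ud z}{2}=\frac{(1+h)(\beta_2-\beta_1)}{2},
\end{equation*}
which is $\leq (1+h)(\beta_2-\beta_1)$ and gives the claimed bound for $\beta\mapsto u(t,y(t,\beta))$.

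The argument is essentially soft; the only real points to check are that $G$ really is a bijection (which only uses $u_x\in L^2$, and the periodicity/linear growth coming from the $y$ summand to handle the unbounded range of $\beta$) and that one may freely use absolute continuity of $u(t,\cdot)$ at this fixed-time level. The main potential obstacle is ensuring that the computation is uniform in $t\geq 0$, but this is immediate once one recalls the a priori bound $\|u(t,\cdot)\|_{H^1}\leq C\|u_0\|_{H^1}$ noted at the start of Section 5, which guarantees $\int_\mathbb{S}u_x^2(t,z)\,\ud z$ remains controlled and in particular ensures $G$ has the required growth at $\pm\infty$ for every $t$.
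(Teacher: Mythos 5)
Your proposal is correct and follows essentially the same route as the paper: the first estimate comes from subtracting the two defining equations in \eqref{flow2} and dropping the nonnegative $\int u_x^2$ term, and the second from bounding $\int|u_x|$ by $\tfrac12\bigl[(y_2-y_1)+\int u_x^2\bigr]$ (the paper uses the pointwise inequality $|u_x|\le\tfrac12(1+u_x^2)$, which is just the integrand-level version of your Cauchy--Schwarz plus AM--GM step). Your added verification that the map $\beta\mapsto y(t,\beta)$ is well-defined is a harmless and reasonable supplement to the paper's argument.
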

\begin{proof}
1. For fixed time $t\geq 0$, it's easy to deduce that $y(t,\beta)$ is continuous and strictly monotonic. Moreover, we have $y(t,0)=0$, $y(t,1)=1$ and $y(t,\beta+1)=y(t,\beta)+1$. If $\beta_1<\beta_2$, then
\begin{align}\label{flow3}
y(t,\beta_2)-y(t,\beta_1)=-\int_{y(t,\beta_1)}^{y(t,\beta_2)} u^2_x(z){\ud}z+(1+h)(\beta_2-\beta_1)\leq (1+h)(\beta_2-\beta_1).
\end{align}

2. To prove the Lipschitz continuous of the map $\beta \mapsto u(t,y(t,\beta))$, let's suppose that $\beta_1<\beta_2$. By \eqref{flow3}, it follows that
\begin{align}\label{flow4}
\big|u(t,y(t,\beta_2))-u(t,y(t,\beta_1))\big|&\leq \int_{y(t,\beta_1)}^{y(t,\beta_2)}|u_x|{\ud}z\leq \int_{y(t,\beta_1)}^{y(t,\beta_2)}\frac{1}{2}(1+u^2_x){\ud}z \notag\\
&\leq \frac{1}{2}\Big[y(t,\beta_2)-y(t,\beta_1)+\int_{y(t,\beta_1)}^{y(t,\beta_2)} u^2_x(z){\ud}z\Big] \notag\\
&\leq \frac{1}{2}(1+h)(\beta_2-\beta_1).
\end{align}

\end{proof}

\begin{lemm}\label{lip2}
Let $u=u(t,x)$ be a global admissible conservative weak solution of \eqref{vis}. Then, for any $y_0(\xi)$ satisfying $y_0(\xi)+\int_{0}^{y_0(\xi)}u^2_{0x}(z){\ud}z=(1+h)\xi$, there exists a unique Lipschitz continuous map $t \mapsto y(t):=y(t,\beta(t,\xi))$ which satisfies \eqref{flow1}.
\end{lemm}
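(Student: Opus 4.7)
The plan is to reformulate \eqref{flow1} in the adapted variable $\beta$. Define $\beta(t)$ implicitly by $y(t)=y(t,\beta(t))$, so by \eqref{flow2} one has
$y(t)+\int_{0}^{y(t)}u_x^2(t,z)\,\ud z=(1+h)\beta(t)$.
I would derive a Carathéodory ODE for $\beta$ whose right-hand side is Lipschitz in $\beta$ on every bounded time interval, and then invoke the classical Picard--Lindelöf theorem. The existence and uniqueness of $y$ will follow by transporting the conclusion back through the bijection $\beta\mapsto y(t,\beta)$.

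Differentiating the defining identity formally in $t$, using $\dot y(t)=-u^2(t,y(t))$ from \eqref{flow1}, the conservation identity \eqref{flow1.5} (to be justified in the weak sense), and $\int_{0}^{y(t)}(u^2)_x(t,z)\,\ud z=u^2(t,y(t))-u^2(t,0)$, the two copies of $u^2(t,y(t))$ cancel. This leaves
$(1+h)\dot\beta(t)=-u^2(t,0)-(u^2u_x^2)(t,0)-2P(1)\bigl(u(t,y(t))-u(t,0)\bigr).$
The right-hand side, viewed as a function of $(t,\beta)$, depends on $\beta$ only through $u(t,y(t,\beta))$, and Lemma \ref{lip1.1} tells us this is Lipschitz in $\beta$ with constant $(1+h)/2$. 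The other terms are purely functions of $t$; by \eqref{flow5.11} the integrated boundary flux $\int_{0}^{t}(u^2u_x^2)(s,0)\,\ud s$ is a well-defined, locally bounded, absolutely continuous function of $t$, while $u^2(t,0)$ and $u(t,0)$ are bounded by Sobolev embedding $H^1(\mathbb{S})\hookrightarrow C(\mathbb{S})$ and $\|u\|_{H^1}\le C\|u_0\|_{H^1}$.

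Therefore the Cauchy problem $\dot\beta=G(t,\beta)$, $\beta(0)=\xi$, satisfies the Carathéodory hypotheses with a globally Lipschitz drift in the spatial argument, so a unique absolutely continuous solution $\beta(\cdot)$ exists on $[0,T]$ for every $T>0$. Setting $y(t):=y(t,\beta(t))$ and applying the Lipschitz estimate of Lemma \ref{lip1.1} gives $|y(t_2)-y(t_1)|\le(1+h)|\beta(t_2)-\beta(t_1)|$, hence $y$ is Lipschitz in $t$. Reversing the calculation above, the derived ODE for $\beta$ is equivalent to $\dot y(t)=-u^2(t,y(t))$, so $y$ solves \eqref{flow1}. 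Conversely, any Lipschitz solution $y$ of \eqref{flow1} induces through \eqref{flow2} an absolutely continuous $\beta$ satisfying the same ODE, and Picard--Lindelöf uniqueness for $\beta$ transfers to $y$.

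The main obstacle is rigorously justifying the time-derivative $\frac{\ud}{\ud t}\int_{0}^{y(t)}u_x^2(t,z)\,\ud z$, since $u_x$ is merely $L^\infty_t L^2_x$ and neither $u_x^2$ nor the characteristic is classically smooth. This is exactly the role of Lemma \ref{lip1}: substituting the space--time bumps $\psi_\epsilon$ of \eqref{flow2.4} into the admissibility identity \eqref{globalweak2} and passing to the limit produces the integrated form of \eqref{flow1.5} with the correct boundary correction at the Lebesgue point $z=0$, which \eqref{flow5.11} guarantees is meaningful and bounded. A secondary subtlety is that $\beta\mapsto y(t,\beta)$ is only nondecreasing (not strictly increasing), but \eqref{flow2} uniquely determines $\beta$ from $y$, so the change of variables is well defined and the argument above goes through without loss.
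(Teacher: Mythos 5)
Your uniqueness argument is essentially the paper's: you work with the differentiated (Carath\'eodory) form of the equation for $\beta$, while the paper works with the integrated form \eqref{flow7} and runs an explicit contraction in a weighted norm; the substance --- Lipschitz dependence on $\beta$ only through $u(t,y(t,\beta))$ via Lemma \ref{lip1.1}, plus the boundary terms $u^2(s,0)$ and the limit \eqref{flow5.11} being functions of $t$ alone --- is the same, and your derived identity $(1+h)\dot\beta=-u^2(t,0)-(u^2u_x^2)(t,0)-2P(1)\bigl(u(t,y(t))-u(t,0)\bigr)$ is correct.

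The gap is in the existence half. You obtain a unique $\beta(\cdot)$ from Picard--Lindel\"of, set $y(t):=y(t,\beta(t))$, and assert that ``reversing the calculation'' shows $\dot y=-u^2(t,y(t))$. That reversal requires the balance law \eqref{flow1.5} (equivalently \eqref{xi}) to hold along the curve you just constructed; but the weak derivation of \eqref{flow1.5} via the test functions $\psi_{2\epsilon}$ is only valid for curves already known to solve \eqref{flow1}: the error term $\lim_{\epsilon\to0}\int_\tau^t\int_{\frac38\epsilon+y(s)}^{\frac48\epsilon+y(s)}\frac8\epsilon u_x^2\bigl(u^2(s,y(s))-u^2(s,z)\bigr)\,{\ud}z\,{\ud}s$ appears in that form, and vanishes, precisely because $\partial_s p_{2\epsilon}$ produces $\dot y(s)=-u^2(s,y(s))$ on the moving strip. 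For a general Lipschitz curve the corresponding term is $\frac8\epsilon u_x^2\bigl(-\dot y(s)-u^2(s,z)\bigr)$, which contributes an uncontrolled concentration of $u_x^2$ at $z=y(s)$ unless $\dot y=-u^2$ is already known. So your construction is circular: you cannot certify that the $y$ built from the $\beta$-ODE is a characteristic. The paper closes this loop by proving existence of a solution to \eqref{flow1} separately (step 1 of its proof: Arzel\`a--Ascoli and a Schauder/Peano-type argument, using only that $x\mapsto u(t,x)$ is continuous), and then uses the $\beta$-equation solely for uniqueness. You need that independent existence step, or some substitute for it. (A minor further point: $\beta\mapsto y(t,\beta)$ defined by \eqref{flow2} is in fact strictly increasing, since $y(t,\beta_1)=y(t,\beta_2)$ forces $\beta_1=\beta_2$ directly from \eqref{flow2}; your worry about mere monotonicity is unnecessary but harmless.)
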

\begin{proof}
1. Since $u(t,x)$ belongs to $L^\infty(\mathbb{R}^+;H^1)$ and $H^1\hookrightarrow C^{\frac{1}{2}}$, then $|u(t,x)|\leq M$. For any  $b>0$, suppose that $a>0$ is small enough such that $a\leq\frac{b}{M^2}$, then $u(t,x)\in L^\infty\big([0,a];C[y_0(\xi)-b,y_0(\xi)+b]\big)$. Applying the Arezela-Ascoli theorem and the Schauder fixed point theorem, we can obtain that the Cauchy problem \eqref{flow1} has a solution $y(t)$ on $[0,a]$. Moreover, $y(t)$ is Lipschitz continuous with $t$ on $[0,a]$. For other cases, we can use the continuous method. Thus, $y(t)$ is Lipschitz continuous with time $t$ for any $t\geq 0$.
	
2. For $\epsilon>0$ small enough, define
\begin{align}\label{uyt}
	\psi_{2\epsilon }(s,z):=\min \{p_{2\epsilon}(s,z), \chi_{\epsilon}(s)\},
\end{align}
where
\begin{equation}\label{pzs}
	p_{2\epsilon}(s,z)=\left\{
	\begin{array}{rcl}
		0 & & {0 \leq z<\frac{1}{8}\epsilon},\\
		\frac{8}{\epsilon}(z-\frac{\epsilon}{8}) & & {\frac{1}{8}\epsilon \leq z < \frac{2}{8}\epsilon},\\
		1 & & \frac{2}{8}\epsilon \leq z < \frac{3}{8}\epsilon+y(s),\\
		1-\frac{8}{\epsilon}\Big(z-\frac{3}{8}\epsilon-y(s)\Big) & & {\frac{3}{8}\epsilon+y(s) \leq z<\frac{4}{8}\epsilon+y(s)},\\
		0 & & {\frac{4}{8}\epsilon+y(s) \leq y<1},
	\end{array} \right.
\end{equation}
\begin{equation}\label{flow2.33}
	\chi_{\epsilon}(s)=\left\{
	\begin{array}{rcl}
		0 & & { 0 \leq s< \tau-\epsilon},\\
		\frac{1}{\epsilon}(s-\tau+\epsilon) & & {\tau-\epsilon \leq s < \tau},\\
		1 & & {\tau \leq s < t},\\
		1-\frac{1}{\epsilon}(s-t) & & {t \leq s<t+\epsilon},\\
		0 & & {t+\epsilon\leq s}.
	\end{array} \right.
\end{equation}
By an approximation argument, the identity \eqref{weak2} remains valid for any test function $\psi$ which is Lipschitz continuous with compact support. Using $\psi_{2\epsilon}$ as the test function in \eqref{weak2} and taking the limit as $\epsilon \rightarrow 0$, we find 
\begin{align}\label{psi2}
	\int_{0}^{y(t)} u^2_x(z){\ud}z-\int_{0}^{y(\tau)} u^2_x(z){\ud}z=&\int_{\tau}^{t}\int_{0}^{y(s)}\big((u^2)_x-2P(1)u_{x}\big){\ud}z{\ud}s-{\lim_{\epsilon \to 0}}\int_{\tau}^{t}\int_{\frac{1}{8}\epsilon}^{\frac{2}{8}\epsilon}\frac{8}{\epsilon}(u^2u^2_x){\ud}z{\ud}s\notag\\
	&-{\lim_{\epsilon \to 0}}\int_{\tau}^{t}\int_{\frac{3}{8}\epsilon+y(s)}^{\frac{4}{8}\epsilon+y(s)}\frac{8}{\epsilon}u^2_x\Big(u^2(s,y(s))-u^2(s,z)\Big){\ud}z{\ud}s.
\end{align}
Since $u_x\in L^2(\mathbb{S})$, similar to \cite{bcz} by the Cauchy inequality and the dominated convergence theorem, we deduce that
\begin{align*}
	{\lim_{\epsilon \to 0}}\int_{\tau}^{t}\int_{\frac{3}{8}\epsilon+y(s)}^{\frac{4}{8}\epsilon+y(s)}\frac{8}{\epsilon}u^2_x\Big(u^2(s,y(s))-u^2(s,z)\Big){\ud}z{\ud}s=0\quad \text{for a.e. $y(s)\in\mathbb{S}$.}
\end{align*}
By lemma \ref{lip0} and the fact that the map $\xi \mapsto y(t):=y(t,\beta(t,\xi))$ is strictly monotonic and Lipschitz continuous from $[0,1]$ to $[0,1]$, we see that
\begin{align*}
	{\lim_{\epsilon \to 0}}\int_{\tau}^{t}\int_{\frac{3}{8}\epsilon+y(s)}^{\frac{4}{8}\epsilon+y(s)}\frac{8}{\epsilon}u^2_x\Big(u^2(s,y(s))-u^2(s,z)\Big){\ud}z{\ud}s=0\quad \text{for a.e. $\xi\in\mathbb{S}$.}
\end{align*}
Thus, for almost everywhere $\xi\in\mathbb{S}$, we discover
\begin{align}\label{xi}
	\int_{0}^{y(t)} u^2_x(z){\ud}z-\int_{0}^{y(\tau)} u^2_x(z){\ud}z=\int_{\tau}^{t}\int_{0}^{y(s)}\big((u^2)_x-2P(1)u_{x}\big){\ud}z{\ud}s-{\lim_{\epsilon \to 0}}\int_{\tau}^{t}\int_{\frac{1}{8}\epsilon}^{\frac{2}{8}\epsilon}\frac{8}{\epsilon}(u^2u^2_x){\ud}z{\ud}s.
\end{align}
When we take $\tau=0$ in \eqref{xi}, we get that for almost everywhere $\xi\in\mathbb{S}$,
\begin{align}\label{xii}
	\int_{0}^{y(t)} u^2_x(z){\ud}z=\int_{0}^{y_0(\xi)} u^2_{0x}(z){\ud}z+\int_{0}^{t}\int_{0}^{y(s)}\big((u^2)_x-2P(1)u_{x}\big){\ud}z{\ud}s-{\lim_{\epsilon \to 0}}\int_{0}^{t}\int_{\frac{1}{8}\epsilon}^{\frac{2}{8}\epsilon}\frac{8}{\epsilon}(u^2u^2_x){\ud}z{\ud}s.
\end{align}

3. We consider the equation:
\begin{align}\label{flow5}
(1+h)\beta(t,\xi)=&y(t)+\int_{0}^{y(t)} u^2_x(z){\ud}z\notag\\
=&y_0(\xi)+\int_{0}^{y_0(\xi)} u^2_{0x}(z){\ud}z+\int_{0}^{t}\Big[-u^2(s,y(s))+\int_{0}^{y(s)}(u^2)_x-2P(1)u_x{\ud}z\Big]{\ud}s\notag\\
&-{\lim_{\epsilon \to 0}}\int_{0}^{t}\int_{\frac{1}{8}\epsilon}^{\frac{2}{8}\epsilon}\frac{8}{\epsilon}(u^2u^2_x){\ud}z{\ud}s\notag\\
=&(1+h)\xi+\int_{0}^{t}\int_{0}^{y(s)}-2P(1)u_x{\ud}z{\ud}s-\int_{0}^{t}u^2(s,0){\ud}s-{\lim_{\epsilon \to 0}}\int_{0}^{t}\int_{\frac{1}{8}\epsilon}^{\frac{2}{8}\epsilon}\frac{8}{\epsilon}(u^2u^2_x){\ud}z{\ud}s\notag\\
=&(1+h)\xi+\int_{0}^{t}\int_{0}^{y(s)}-2P(1)u_x{\ud}z{\ud}s-\int_{0}^{t}u^2(s,0){\ud}s-A(t),
\end{align}
where $A(t)={\lim\limits_{\epsilon \to 0}}\int_{0}^{t}\int_{\frac{1}{8}\epsilon}^{\frac{2}{8}\epsilon}\frac{8}{\epsilon}(u^2u^2_x){\ud}z{\ud}s$.\\
Introducing the function
\begin{align}\label{flow6}
G(s,\beta(s,\xi))=\int_{0}^{y(s,\beta(s,\xi))}-2P(1)u_x{\ud}z.
\end{align}
Then we can rewrite the right-hand side of \eqref{flow5} in the from
\begin{align}\label{flow7}
\beta(t,\xi)=\xi+\frac{1}{1+h}\int_{0}^{t}G(s,\beta(s,\xi)){\ud}s-\frac{1}{1+h}\int_{0}^{t}u^2(s,0){\ud}s-\frac{1}{1+h}A(t).
\end{align}

For each fixed $t\geq 0$, since $\|u\|_{H_1}\leq C\|u_0\|_{H_1}$ and the map $\beta \mapsto u(t,y(t,\beta))$ is Lipschitz continuous, we can deduce that the function $\beta \mapsto G(s,\beta)$ is uniformly bounded and Lipschitz continuous:
\begin{align}\label{flow8}
|G(s,\beta_1)-G(s,\beta_2)|\leq C_{u_0}|\beta_1-\beta_2|.
\end{align}

Moreover, for each fixed $t\geq 0$, the map $\xi \mapsto \beta(t,\xi)$ is strictly monotonic and Lipschitz continuous.
Indeed, by \eqref{flow8} we have
\begin{align}\label{flow8.}
|\beta(t,\xi_2)-\beta(t,\xi_1))|&\leq |\xi_2-\xi_1|+\frac{1}{1+h}\int_{0}^{t}|G(s,\beta(s,\xi_2))-G(s,\beta(s,\xi_1))|{\ud}s\notag\\
&\leq |\xi_2-\xi_1|+C_{u_0}\int_{0}^{t}|\beta(s,\xi_2)-\beta(s,\xi_1)|{\ud}s\notag\\
&\leq e^{C_{u_0}T}|\xi_2-\xi_1|,
\end{align}
where the last inequality is obtained by using the Gronwall lemma.

Assuming $\xi_2 >\xi_1 $, we have
\begin{align}\label{flow8..}
\beta(t,\xi_2)-\beta(t,\xi_1)= \xi_2-\xi_1+\frac{1}{1+h}\int_{0}^{t}G(s,\beta(s,\xi_2))-G(s,\beta(s,\xi_1)){\ud}s \geq (\xi_2-\xi_1)(1-C_{u_0}t).
\end{align}
This implies that monotonicity makes sense when $t$ is sufficiently small. Without loss of generality, we can assume that $t$ is sufficiently small, otherwise we can use the continuous method. For each fixed $t\geq 0$, being the composition of two Lipschitz functions, then the maps $\xi \mapsto G(t,y(t,\beta(t,\xi)))$ and $\xi \mapsto u(t,y(t,\beta(t,\xi)))$ are Lipschitz continuous.

4. Thanks to the Lipschitz continuity of the function G, the existence of a unique solution to the integral equation \eqref{flow7} can be prove by a standard fixed point theory. As a matter of fact, we consider the Banach space of all continuous function $\beta:\mathbb{R}^+\rightarrow \mathbb{R} $ with weighted norm
$$|\|\beta\||:=\sup_{t\geq0}e^{-2Ct}|\beta(t)|.$$
On this space, we claim that the Picard map
$$(P\beta)(t)=\xi+\int_{0}^{t}G(s,\beta(s,\xi)){\ud}s-\int_{0}^{t}u^2(s,0){\ud}s-A(t)$$
ia a strict contraction. Indeed, assume $|\|\beta_2-\beta_1\||=a>0$. This implies
$$|\beta_2(s)-\beta_1(s)|\leq ae^{2Cs}.$$
Hence
\begin{align}
|(P\beta_2)(t)-(P\beta_1)(t)|=&\Big|\int_{0}^{t}G(s,\beta_1(s))-G(s,\beta_2(s)){\ud}s\Big|\leq C\int_{0}^{t}\big|\beta_1(s)-\beta_2(s)\big|{\ud}s\notag\\
\leq& \int_{0}^{t}Cae^{2Cs}{\ud}s\leq \frac{a}{2}e^{2Ct}.
\end{align}
We thus conclude that $|\|P\beta_2-P\beta_1\||\leq \frac{a}{2}$, by the contraction mapping principle, the integral equation \eqref{flow7} has a unique solution.

Since the map $\beta \mapsto y(t,\beta)$ is strictly monotonic increasing and Lipschitz continuous, we obtain a unique solution $y(t):=y(t,\beta(t,\xi))$ from \eqref{flow5}. Being the composition of two Lipschitz functions, the map $t \mapsto y(t,\beta(t,\xi))$ is Lipschitz continuous and provides the unique solution to \eqref{flow5}. 

5. Finally, we prove the uniqueness of \eqref{flow1}. Assume there are two different functions $y_1(\cdot)$ and $y_2(\cdot)$, both satisfying \eqref{xii} together with the characteristic
equation \eqref{flow1}. Choose the  measurable functions $\beta_1$ and $\beta_2$ such that $y_1(t)=y(t,\beta_1(t))$ and $y_2(t)=y(t,\beta_2(t))$. Then $y_1(t)$ and $y_2(t)$ satisfy \eqref{flow5} with the same initial condition. This contradicts the uniqueness of \eqref{flow5} proved in step 4. 
\end{proof}

\begin{lemm}\label{lip3}
For any $0\leq \tau \leq t$, the map $t \mapsto u(t,y(t))$ is Lipschitz continuous and we have
\begin{align}\label{flow20}
u(t,y(t))-u(\tau,y(\tau))=\int_{\tau}^{t} H(s,y(s)){\ud}s,
\end{align}
where $H(s,x)=\partial_x^{-1}(u-uu_x^2)-f(s)$ is a bound variable.
\end{lemm}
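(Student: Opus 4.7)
The approach mimics Lemma~\ref{lip2}, but applied to the weak identity \eqref{weak1} (rather than \eqref{weak2}) with a test function that concentrates in $z$ at the characteristic point $y(s)$ instead of integrating over $[0,y(s)]$. Fix a bump $\phi\in C_c^\infty((-1,1))$ with $\int\phi=1$, and for small $\epsilon>0$ set $q_\epsilon(s,z):=\epsilon^{-1}\phi\bigl((z-y(s))/\epsilon\bigr)$, periodized on $\mathbb S$. With $\chi_\epsilon$ the time cutoff from \eqref{flow2.33}, take the Lipschitz test function $\psi_\epsilon(s,z):=\chi_\epsilon(s)\,q_\epsilon(s,z)$. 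Since $t\mapsto y(t)$ is Lipschitz by Lemma~\ref{lip2}, for a.e.\ $s$ we have the key identity $\partial_s q_\epsilon=-y'(s)\,\partial_z q_\epsilon=u^2(s,y(s))\,\partial_z q_\epsilon$.

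Substituting $\psi_\epsilon$ into \eqref{weak1} (the initial trace vanishes since $\chi_\epsilon(0)=0$) and integrating by parts in $z$ the term containing $\partial_z q_\epsilon$, the identity rewrites as
\begin{align*}
\int_{\mathbb{R}^+}\!\!\int_{\mathbb S}u\,\chi_\epsilon'\,q_\epsilon\,dz\,ds \;+\; \int_\tau^t\chi_\epsilon(s)\!\int_{\mathbb S} q_\epsilon\,u_x\bigl(u^2(s,z)-u^2(s,y(s))\bigr)dz\,ds \;=\; -\int_{\mathbb{R}^+}\!\!\int_{\mathbb S} H\,\psi_\epsilon\,dz\,ds.
\end{align*}
As $\epsilon\to 0$, the first summand converges to $u(\tau,y(\tau))-u(t,y(t))$ because $\int_{\mathbb S}u(s,\cdot)\,q_\epsilon\,dz\to u(s,y(s))$ by continuity of $u(s,\cdot)\in C(\mathbb S)$, while $\chi_\epsilon'$ concentrates at $\tau$ and $t$ with opposite signs; the right-hand side tends to $-\int_\tau^t H(s,y(s))\,ds$ since $H(s,\cdot)$ is continuous in $z$ (being a primitive of an $L^1$-function). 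Provided the middle commutator vanishes, this yields \eqref{flow20}. The Lipschitz continuity of $t\mapsto u(t,y(t))$ is then immediate from $H\in L^\infty_{s,x}$, which holds because $u\in L^\infty_t H^1_x$ gives $u-uu_x^2\in L^\infty_t L^1_x$, so $\partial_x^{-1}(u-uu_x^2)$ and $f(s)$ are both uniformly bounded.

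The main obstacle is controlling the commutator. Using the embedding $H^1(\mathbb S)\hookrightarrow C^{1/2}(\mathbb S)$, on the support $\{|z-y(s)|\le\epsilon\}$ of $q_\epsilon$ one has $|u^2(s,z)-u^2(s,y(s))|\le C\epsilon^{1/2}$. Combining this with Cauchy--Schwarz and $\|q_\epsilon\|_{L^2}\le C\epsilon^{-1/2}$ yields the pointwise-in-$s$ bound
$$\left|\chi_\epsilon(s)\!\int_{\mathbb S} q_\epsilon(s,z)\,u_x(s,z)\bigl(u^2(s,z)-u^2(s,y(s))\bigr)dz\right|\le C\,\|u_x(s,\cdot)\|_{L^2([y(s)-\epsilon,\,y(s)+\epsilon])},$$
which vanishes as $\epsilon\to 0$ for a.e.\ $s$ by the Lebesgue differentiation theorem applied to $u_x^2(s,\cdot)$, and is dominated by $C\|u_x(s,\cdot)\|_{L^2(\mathbb S)}\le C'$; dominated convergence then kills the time integral over $[\tau,t]$. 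The subtlety is that a naive $L^\infty$ bound on $q_\epsilon$ gives only $O(1)$, so both the $C^{1/2}$ H\"older regularity of $u$ and the decay of the localized $L^2$-mass of $u_x$ are essential to extract the needed gain.
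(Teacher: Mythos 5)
Your argument is correct, but it takes a genuinely different route from the paper's. The paper never tests \eqref{weak1} with a moving mollifier: it first converts \eqref{weak1} into a weak identity for $u_x$ by choosing $\phi=\psi_x$ (equation \eqref{flow21}), then uses plateau-type test functions supported on $\{\tfrac{\epsilon}{8}\le z\le y(s)\}$ and on $\{\tfrac{\epsilon}{8}\le z\le k\}$ to obtain formulas for $\int_0^{y(t)}u_x\,{\ud}z$ and for $\int_0^k u\,{\ud}z$, differentiates the latter in $k$ to recover $u(t,k)-u(\tau,k)$, specializes to the base point $k=0$, and finally combines the resulting identities so that the terms $\int_\tau^t H(s,0)\,{\ud}s$ and $\lim_{\epsilon\to0}\int_\tau^t\int_{\epsilon/8}^{\epsilon/4}\tfrac{8}{\epsilon}(u^2u_x)\,{\ud}z\,{\ud}s$ cancel. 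Your moving bump $q_\epsilon$ concentrated at $y(s)$, combined with the transport identity $\partial_sq_\epsilon=u^2(s,y(s))\,\partial_zq_\epsilon$, reaches \eqref{flow20} in a single step and entirely bypasses the auxiliary base point $z=0$, the quantity $A(t)$, and the Lebesgue-point hypothesis at the origin from Lemma \ref{lip1}; the price is the commutator $\int q_\epsilon u_x\bigl(u^2-u^2(s,y(s))\bigr)\,{\ud}z$, which you dispose of correctly via the $C^{1/2}$ embedding, $\|q_\epsilon\|_{L^2}=O(\epsilon^{-1/2})$, the absolute continuity of $\int u_x^2$, and dominated convergence --- this is the same cancellation mechanism the paper needs for its own boundary terms at $z=y(s)$ in \eqref{flow22.} and \eqref{folw16.}, so you gain simplicity without losing rigor. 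Two small points to tidy up: for $\tau=0$ you should switch to the one-sided cutoff $\chi_{1\epsilon}$ so that the initial-trace term of \eqref{weak1} correctly produces $u_0(y_0(\xi))$; and, exactly as in the paper's proof, passing $\chi_\epsilon'$ to the limit tacitly requires $\tau$ and $t$ to be Lebesgue points of $s\mapsto u(s,y(s))$, so one first obtains \eqref{flow20} for a.e.\ $\tau<t$ and then extends to all $\tau\le t$ using that the right-hand side is continuous in $(\tau,t)$.
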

\begin{proof}
By \eqref{weak1}, for any test function $\phi \in C_{0}^\infty(\mathbb{R}^+;\mathcal{D}(\mathbb S))$, one has
\begin{align}\label{flow20.5}
\int_{\mathbb{R}^+}\int_{\mathbb S}\big(u\phi_{t}-\frac{1}{3}u^3\phi_{x}\big)(s,z){\ud}z{\ud}s=-\int_{\mathbb{R}^+}\int_{\mathbb S}\big(H\phi\big)(s,z){\ud}z{\ud}s-\int_{\mathbb S}u_0(z)\phi(0,z){\ud}z.
\end{align}
Given any $\psi \in C_{0}^\infty(\mathbb{R}^+;\mathcal{D}(\mathbb S))$, let $\phi=\psi_x$. Since the map $x\mapsto u(t,x)$ is absolutely continuous, integrating by parts, we can obtain
\begin{align}\label{flow21}
\int_{\mathbb{R}^+}\int_{\mathbb S}\big(u_x\psi_{t}-u^2u_x\psi_{x}\big)(s,z){\ud}z{\ud}s=\int_{\mathbb{R}^+}\int_{\mathbb S}\big(H\psi_x\big)(s,z){\ud}z{\ud}s-\int_{\mathbb S}u_{0x}(z)\psi(0,z){\ud}z.
\end{align}
By an approximation argument, the identity \eqref{flow21} remains valid for any test function $\psi$ if it is Lipschitz continuous with compact support. Then we first let $\psi=\psi_{2\epsilon}$ in \eqref{uyt}.  Taking the limit of \eqref{flow21} as $\epsilon \rightarrow 0$, we have
\begin{align}\label{flow22.}
\int_{0}^{y(t)}u_x(t,z){\ud}z-\int_{0}^{y(\tau)}u_x(\tau,z){\ud}z=&\int_{\tau}^{t} H(s,y(s))-H(s,0){\ud}s-{\lim_{\epsilon \to 0}}\int_{\tau}^{t}\int_{\frac{1}{8}\epsilon}^{\frac{2}{8}\epsilon} \frac{8}{\epsilon}(u^2u_x)(s,z){\ud}z{\ud}s\notag\\
&-{\lim_{\epsilon \to 0}}\int_{\tau}^{t}\int_{\frac{3}{8}\epsilon+y(s)}^{\frac{4}{8}\epsilon+y(s)} \frac{8}{\epsilon}u_x(s,z)\Big(u^2(s,y(s))-u^2(s,z)\Big){\ud}z{\ud}s.
\end{align}
Applying Lebesgue differentiation theorem, we obtain that
\begin{align}\label{folw16.}
{\lim_{\epsilon \to 0}}\int_{\tau}^{t}\int_{\frac{3}{8}\epsilon+y(s)}^{\frac{4}{8}\epsilon+y(s)}\frac{8}{\epsilon}u_x(s,z)\Big(u^2(s,y(s))-u^2(s,z)\Big){\ud}z{\ud}s
=0\quad \text{for a.e. $y(s)\in\mathbb{S}$.}
\end{align}
By lemma \ref{lip0}  and the fact that the map $\xi \mapsto y(s):=y(s,\beta(s,\xi))$ is strictly monotonic and Lipschitz continuous from $[0,1]$ to $[0,1]$, we deduce that
\begin{align}\label{folw17}
{\lim_{\epsilon \to 0}}\int_{\tau}^{t}\int_{\frac{3}{8}\epsilon+y(s)}^{\frac{4}{8}\epsilon+y(s)}\frac{8}{\epsilon}u_x(s,z)\Big(u^2(s,y(s))-u^2(s,z)\Big){\ud}z{\ud}s=0 \quad \text{for a.e. $\xi\in\mathbb{S}$.}
\end{align}
Thus, for almost everywhere $\xi\in\mathbb{S}$,
\begin{align}\label{flow22}
\int_{0}^{y(t)}u_x(t,z){\ud}z-\int_{0}^{y(\tau)}u_x(\tau,z){\ud}z=&\int_{\tau}^{t} H(s,y(s))-H(s,0){\ud}s-{\lim_{\epsilon \to 0}}\int_{\tau}^{t}\int_{\frac{1}{8}\epsilon}^{\frac{2}{8}\epsilon} \frac{8}{\epsilon}(u^2u_x)(s,z){\ud}z{\ud}s.
\end{align}

To prove that the map $t \mapsto u(t,y(t))$ is Lipschitz continuous, we modify the  previous test function. For any $k\in[0,1]$, let
\begin{equation}\label{flow23}
p_{3\epsilon}(s,z)=\left\{
\begin{array}{rcl}
0 & & {0 \leq z<\frac{1}{8}\epsilon},\\
\frac{8}{\epsilon}(z-\frac{\epsilon}{8}) & & {\frac{1}{8}\epsilon \leq z < \frac{2}{8}\epsilon},\\
1 & & {\frac{2}{8}\epsilon \leq z < \frac{3}{8}\epsilon+k},\\
1-\frac{8}{\epsilon}(z-\frac{3}{8}\epsilon-k) & & {\frac{3}{8}\epsilon+k \leq z<\frac{4}{8}\epsilon+k},\\
0 & & {\frac{4}{8}\epsilon+k \leq z<1},
\end{array} \right.
\end{equation}
\begin{equation}\label{flow23.5}
\chi_{\epsilon}(s)=\left\{
\begin{array}{rcl}
0 & & { 0 \leq s< \tau-\epsilon},\\
\frac{1}{\epsilon}(s-\tau+\epsilon) & & {\tau-\epsilon \leq s < \tau},\\
1 & & {\tau \leq s < t},\\
1-\frac{1}{\epsilon}(s-t) & & {t \leq s<t+\epsilon},\\
0 & & {t+\epsilon\leq s}.
\end{array} \right.
\end{equation}
Define
\begin{align}\label{flow24}
\psi_{3\epsilon}(s,z):=\min \{p_{3\epsilon}(s,z), \chi_{\epsilon}(s)\}.
\end{align}

\noindent Using the test function $\psi=\psi_{3\epsilon}$ in \eqref{flow21} and taking the limit as $\epsilon \rightarrow 0$, we gain
\begin{align}\label{flow25}
\int_{0}^{k}u_x(t,z){\ud}z-\int_{0}^{k}u_x(\tau,z){\ud}z
=&\int_{\tau}^{t} H(s,k)-H(s,0){\ud}s-{\lim_{\epsilon \to 0}}\int_{\tau}^{t}\int_{\frac{1}{8}\epsilon}^{\frac{2}{8}\epsilon}\frac{8}{\epsilon}(u^2u_x)(s,z){\ud}z{\ud}s\notag\\
&+{\lim_{\epsilon \to 0}}\int_{\tau}^{t}\int_{\frac{3}{8}\epsilon+k}^{\frac{4}{8}\epsilon+k}\frac{8}{\epsilon}u^2u_x{\ud}z{\ud}s.
\end{align}
In addition, using $\phi=\psi_{3\epsilon}$ in \eqref{flow20.5} and taking the limit as $\epsilon \rightarrow 0$, we obtain
\begin{align}\label{flow26}
\int_{0}^{k}u(t,z){\ud}z-\int_{0}^{k}u(\tau,z){\ud}z=&\int_{\tau}^{t}\int_{0}^{k} H(s,z){\ud}z{\ud}s-\frac{1}{3}{\lim_{\epsilon \to 0}}\int_{\tau}^{t}\int_{\frac{1}{8}\epsilon}^{\frac{2}{8}\epsilon}\frac{8}{\epsilon}u^3{\ud}z{\ud}s\notag\\
&+\frac{1}{3}{\lim_{\epsilon \to 0}}\int_{\tau}^{t}\int_{\frac{3}{8}\epsilon+k}^{\frac{4}{8}\epsilon+k}\frac{8}{\epsilon}u^3{\ud}z{\ud}s.
\end{align}
Differentiating \eqref{flow26} with the variable $k$ and using the fact that $u$ is bounded and absolutely continuous, we have
\begin{align}\label{flow27}
u(t,k)-u(\tau,k)=&\int_{\tau}^{t} H(s,k){\ud}s+{\lim_{\epsilon \to 0}}\int_{\tau}^{t}\int_{\frac{3}{8}\epsilon+k}^{\frac{4}{8}\epsilon+k}\frac{8}{\epsilon}u^2u_x{\ud}z{\ud}s.
\end{align}
Subtracting \eqref{flow27} from \eqref{flow25}, we receive
\begin{align}\label{flow28}
u(t,0)-u(\tau,0)=\int_{\tau}^{t}H(s,0){\ud}s+{\lim_{\epsilon \to 0}}\int_{\tau}^{t}\int_{\frac{1}{8}\epsilon}^{\frac{2}{8}\epsilon}\frac{8}{\epsilon}(u^2u_x)(s,z){\ud}z{\ud}s,
\end{align}
Subtracting \eqref{flow28} from \eqref{flow22} once again, we finally get
\begin{align}\label{flow29}
u(t,y(t))-u(\tau,y(\tau))=\int_{\tau}^{t} H(s,y(s)){\ud}s.
\end{align}
Thus,
\begin{align*}
|u(t,y(t))-u(\tau,y(\tau))|=\Big|\int_{\tau}^{t} H(s,y(s)){\ud}s\Big|\leq C_{u_0}(t-\tau),
\end{align*}
that is the map $t \mapsto u(t,y(t))$ is Lipschitz continuous.
\end{proof}

\subsection{Uniqueness of the global admissible conservative weak solution}
\par

We first prove Theorem \ref{th5}. The proof will be worked out in several steps, which is similar to \cite{bcz}.

\textbf{Proof of Theorem \ref{th5}:}~1. From Lemma \ref{lip1.1}--\ref{lip3}, the maps $(t,\beta) \mapsto (y,u)(t,\beta)$, $\beta \mapsto G(t,\beta)$ and $\beta \mapsto H(t,\beta)$ are Lipschitz continuous. By Rademacher's theorem, the partial derivatives $y_t,y_{\beta},u_t,u_{\beta},G_{\beta}$ and $H_{\beta}$ exist almost everywhere. Since $t\mapsto\beta(t,\xi)$ is the unique solution to \eqref{flow5}, then the following holds: 

\textbf{(GC)}~ For $a.e.$ $\xi$ and $a.e.$ $t>0$, the point $\beta(t,\xi)$ ia a Lebesgue point for the partial derivatives $y_t,y_{\beta},u_t,u_{\beta},G_{\beta}$ and $H_{\beta}$. Moreover, $y_{\beta}(t,\xi)>0$ for $a.e.$ $t>0$.

If \textbf{(GC)} holds, we then say that $t \mapsto \beta(t,\xi)$ is a good characteristic.
\quad

2. We seek an ODE describing how the quantities $u_{\beta}$ and $y_{\beta}$ vary along a good characteristic. Assume $\tau,t \not \in\mathcal{N}$ and $\beta(\tau,t,\xi)$ be a good characteristic, where $\beta(\tau,t,\xi)$ is a more general definition of \eqref{flow7}:
\begin{align}\label{unique.1}
\beta(\tau,t,\xi)=\xi+\frac{1}{1+h}\int_{\tau}^{t}G(s,\beta(s,\xi)){\ud}s-\frac{1}{1+h}\int_{\tau}^{t}u^2(s,0){\ud}s-\frac{1}{1+h}{\lim_{\epsilon \to 0}}\int_{\tau}^{t}\int_{\frac{1}{8}\epsilon}^{\frac{2}{8}\epsilon}\frac{8}{\epsilon}(u^2u^2_x){\ud}z{\ud}s.
\end{align}
Differentiating \eqref{unique.1} with $\xi$, we find
\begin{align}\label{unique.2}
\frac{{\ud}}{{\ud}\xi}\beta(\tau,t,\xi)=1+\frac{1}{1+h}\int_{\tau}^{t}G_{\beta}(s,\beta(s,\xi))\frac{{\ud}\beta}{{\ud}\xi}{\ud}s.
\end{align}
Next, differentiating with $\xi$ the identity
\begin{align}
y(t,\beta(t,\xi))=y(\tau,\xi)-\int_{\tau}^{t}u^2(s,y(s,\beta(s,\xi))){\ud}s,
\end{align}
we have
\begin{align}\label{unique.3}
y_{\beta}(t,\beta(t,\xi))\frac{{\ud}}{{\ud}\xi}\beta(\tau,t,\xi)=y_{\xi}(\tau,\xi)-\int_{\tau}^{t}(u^2)_{\beta}(s,y(s,\beta(s,\xi)))\frac{{\ud}}{{\ud}\xi}\beta(s,t,\xi){\ud}s.
\end{align}
Finally, differentiating with $\xi$ the identity \eqref{flow20}, we get
\begin{align}\label{unique.4}
u_{\beta}(t,y(t))\frac{{\ud}}{{\ud}\xi}\beta(\tau,t,\xi)=u_{\beta}(\tau,y(\tau))+\int_{\tau}^{t} H_{\beta}(s,y(s))\frac{{\ud}}{{\ud}\xi}\beta(s,t,\xi){\ud}s.
\end{align}
Combining \eqref{unique.2}-\eqref{unique.4}, we thus obtain the following ODEs
\begin{equation}\label{unique.5}
\left\{\begin{array}{lll}
\frac{{\ud}}{{\ud}t}[\frac{{\ud}}{{\ud}\xi}\beta(\tau,t,\xi)]=\frac{1}{1+h}G_{\beta}(s,\beta(s,\xi))\frac{{\ud}\beta}{{\ud}\xi},\\
\frac{{\ud}}{{\ud}t}[y_{\beta}(t,\beta(t,\xi))\frac{{\ud}}{{\ud}\xi}\beta(\tau,t,\xi)]=-(u^2)_{\beta}(s,y(s,\beta(t,\xi)))\frac{{\ud}}{{\ud}\xi}\beta(\tau,t,\xi),\\
\frac{{\ud}}{{\ud}t}[u_{\beta}(t,y(t))\frac{{\ud}}{{\ud}\xi}\beta(\tau,t,\xi)]=H_{\beta}(s,y(s))\frac{{\ud}}{{\ud}\xi}\beta(\tau,t,\xi).
\end{array}\right.
\end{equation}
In particular, it's easy to verify that the quantities within square brackets on the left hand sides of \eqref{unique.5} are absolutely continuous. After some calculations, we find
\begin{equation}\label{unique.6}
\left\{\begin{array}{lll}
\frac{{\ud}}{{\ud}t}y_{\beta}+\frac{1}{1+h}G_{\beta}y_{\beta}=-(u^2)_{\beta},\\
\frac{{\ud}}{{\ud}t}u_{\beta}+\frac{1}{1+h}G_{\beta}u_{\beta}=H_{\beta}.
\end{array}\right.
\end{equation}

3. We now return to the original coordinate $(t,x)$ and derive an evolution equation for the partial derivative $u_x$ along a "good" characteristic curve.

Fix a point $(\tau,x)$ with $\tau \not\in \mathcal{N}$. Assume that $x$ is a Lebesgue point for the map $x\mapsto u_x(\tau,x)$. Let $\xi$ be such that $x=y(\tau,\xi)$ and assume that $t\mapsto \beta(\tau,t,\xi)$ is a good characteristic, so that \textbf{(GC)} holds. From \eqref{flow2} we observe that
\begin{align}\label{unique.7}
y_{\beta}(\tau,\beta)=\frac{1+h}{1+u^2_x(\tau,y(\tau,\beta))}>0.
\end{align}
Then the partial derivative $u_x$ can be computed as
$$u_x(t,y(t,\beta(\tau,t,\xi)))=\frac{u_{\beta}(t,y(t,\beta(\tau,t,\xi)))}{y_{\beta}(t,\beta(t,\xi))}.$$
Using \eqref{unique.3} and \eqref{unique.4} describing the evolution of $u_{\beta}$ and $y_{\beta}$, we can easily conclude that the map $t\mapsto u_x(t,y(t,\beta(\tau,t,\xi)))$ is absolutely continuous and satisfies
\begin{align}\label{unique5.53}
\frac{{\ud}}{{\ud}t}u_x(t,y(t,\beta(\tau,t,\xi)))=\frac{{\ud}}{{\ud}t}\bigg(\frac{u_{\beta}(t,y(t,\beta(\tau,t,\xi)))}{y_{\beta}(t,\beta(t,\xi))}\bigg)=\frac{y_{\beta}H_{\beta}+2uu_{\beta}^2}{y^2_{\beta}}.
\end{align}

4. Suppose that $u=u(t,x)$ is a admissible conservative weak solution, let
\begin{align*}
&U(t,\xi)=u\big(t,y(t,\xi)\big),\quad V(t,\xi)=\frac{1}{1+u_x^2\circ y},\\
&W(t,\xi)=\frac{u_x\circ y}{1+u_x^2\circ y},\quad Q(t,\xi)=(1+u_x^2\circ y)y_{\xi}.
\end{align*}
After some calculations, we get the following semilinear system
\begin{equation}\label{unique.8}
\left\{\begin{array}{lll}
y_t(t,\xi)=-U^2,\\
U_t(t,\xi)=K(\xi)-P(1)y(\xi)-\frac{1}{1-h}\int_{\mathbb{S}}\big(2QV-Q\big)(\eta)\big(K(\eta)-P(1)y(\eta)\big){\ud}\eta,\\
V_t(t,\xi)=-2UW+2WVP(1),\\
W_t(t,\xi)=2UV-U-2V^2P(1)+VP(1),\\
Q_t(t,\xi)=-2WQP(1).
\end{array}\right.
\end{equation}
For every $\xi\in \mathbb{S}$, we take the following initial condition
\begin{equation}\label{unique.9}
\left\{\begin{array}{lll}
\int_0^{y_0(\xi)}u_{0x}^2{\ud}x+y_0(\xi)=(1+h)\xi,\\
U_0(\xi)=u_0\circ y_0(\xi),\\
V_0(\xi)=\frac{1}{1+u_{0x}^2\circ y_0(\xi)},\\
W_0(\xi)=\frac{u_{0x}\circ y_0(\xi)}{1+u_{0x}^2\circ y_0(\xi)},\\
Q_0(\xi)=(1+u_{0x}^2\circ y_0)y_{0\xi}(\xi)=1+h,
\end{array}\right.
\end{equation}
By the Lipschitz continuity of all coefficients and the previous steps, the Cauchy problem \eqref{unique.8}--\eqref{unique.9} has a unique global solution.

5. To prove the uniqueness, consider two admissible conservative weak solutions $u_1$ and $u_2$ of \eqref{vis} with the same initial data $u_0\in H^1$. For $a.e.~t\geq0$ the corresponding Lipschitz continuous maps $\xi \mapsto y_1(t,\xi)$, $\xi \mapsto y_2(t,\xi)$ are strictly increasing. Hence they have continuous inverses, say $x \mapsto ({y_1})^{-1}(t,x)$, $x \mapsto ({y_2})^{-1}(t,x)$.

As we deduce that
$$y_1(t,\xi)=y_2(t,\xi),\quad\quad u_1(t,y_1(t,\xi))=u_2(t,y_2(t,\xi)).$$
In turn, for a.e. $t\geq0$ and a.e. $x \in \mathbb{S}$, this implies
$$u_1(t,x)=u_1\big(t,{y_1}(t,\xi)\big)=u_2\big(t,{y_2}(t,\xi)\big)=u_2(t,x).$$
This completes the proof of uniqueness. \hfill$\Box$

\textbf{Proof of Theorem \ref{th}~:}

Combining Theorem \ref{th4} and Theorem \ref{th5}, we obtain an unique global admissible conservative weak solution of \eqref{vis}. Since the initial data $\int_{\mathbb{S}}u_0-u_0u_{0x}^2{\ud}x=0$, we have $\int_{\mathbb{S}}u-u{u}^2_x{\ud}x=0$ by the conservative laws. Hence, we deduce that \eqref{vis} is equivalent to \eqref{eq1}, thereby we gain an unique global admissible conservative weak solution of \eqref{eq1}.  \hfill$\Box$
\quad \\

\noindent\textbf{Acknowledgements.}
This work was partially supported by NNSFC (No. 11671407), FDCT (No. 0091/2013/A3), Guangdong Special Support Program (No. 8-2015)
and the key project of NSF of Guangdong Province (No. 2016A030311004).

\bibliographystyle{plain}
\bibliography{reference}
\end{document}